\numberwithin{equation}{section}
\newtheorem{theorem}{Theorem}[section]
\newtheorem{lemma}[theorem]{Lemma}
\DeclareMathOperator{\diver}{div}
\DeclareMathOperator{\trace}{Trace}
\newcommand{\supp}{\text{\rm supp}}
\title[The NS equations in the plane]{Inverse boundary value problem for the
Stokes and the Navier-Stokes equations in the plane}
\author[Lai]{Ru-Yu Lai}
\address{Department of Mathematics, University of Washington, Seattle, WA 98195-4350,
USA}
\curraddr{}
\email{rylai@math.washington.edu }
\thanks{}
\author[Uhlmann]{Gunther Uhlmann}
\address{Department of Mathematics, University of Washington, Seattle, WA 98195-4350,
USA}
\curraddr{}
\email{gunther@math.washington.edu }
\thanks{The first and second author were supported in part by the National Science Foundation. The second author was also supported by a Simons Fellowship.}
\author[Wang]{Jenn-Nan Wang}
\address{Institute of Applied Mathematics, NCTS (Tapei), National Taiwan University, Taipei 106, Taiwan}
\curraddr{}
\email{jnwang@math.ntu.edu.tw}
\thanks{The third author was supported in part by the National Science Council of Taiwan. }
\date{}
\begin{document}


\begin{abstract}
   In this paper, we prove in two dimensions global identifiability of the viscosity in an incompressible fluid by making boundary measurements.
The main contribution of this work is to use more natural boundary measurements, the Cauchy forces, than the Dirichlet-to-Neumann map previously considered in \cite{IY} to prove the uniqueness of the viscosity for the Stokes equations and for the Navier-Stokes equations.
\end{abstract}
\maketitle

\tableofcontents

\section{Introduction}

Let $\Omega$ be a simply connected bounded domain in $\mathbb{R}^2$
with smooth boundary.
Assume that $\Omega$ is filled with an incompressible fluid.
Let $u=(u_1,u_2)^T$ be the velocity vector field satisfying the Stokes equations
\begin{align}\label{stokes}
       \left\{
          \begin{array}{cl}
         \diver\sigma(u,p)=0 & \hbox{in $\Omega$},\\
            \diver u=0  & \hbox{in $ \Omega$},
           \end{array}
        \right.
\end{align}
where $\sigma(u,p)=2\mu\varepsilon-pI_2$ is the stress tensor and $\varepsilon=((\nabla u)+(\nabla u)^T)/2$, $\mu$ is the viscosity and $p$ is the pressure. Here the notation $I_2$ is the $2\times 2$ identity matrix.

Physically,
zero viscosity is observed only in superfluids the have the ability to self-propel and travel in a way that
defies the forces of gravity and surface tension. Otherwise all fluids have positive viscosities.
Thus, we can assume that $\mu>0$ in $\overline\Omega$. The second equation of (\ref{stokes}) is the incompressibility condition. Because of the conservation of mass, the incompressibility condition is equivalent to the material derivative of the density function $\rho$ to be zero, that is,
\begin{align}\label{rho}
     \frac{D\rho}{Dt}=:\frac{\partial\rho}{\partial t}+u\cdot \nabla \rho=0.
\end{align}
When $\rho$ is constant, (\ref{rho}) is satisfied. The above equation also holds for nonconstant density functions. We can conclude that a nonconstant viscosity $\mu$ is possible since the viscosity function is a function of density. A fluid with nonconstant viscosity is called a non-Newtonian fluid which is relatively common, such as blood, shampoo and custard.

Let $g\in H^{3/2}(\partial\Omega)$ satisfy the compatibility condition
\begin{align}\label{compat}
\int_{\partial\Omega} g\cdot \textbf{n} dS=0,
\end{align}
where $\textbf{n}$ is the unit outer normal to $\partial\Omega$. This condition leads to the uniqueness of (\ref{stokes}), that is,
there exists a unique solution $(u,p)\in H^2(\Omega)\times H^1(\Omega)$
($p$ is unique up to a constant) of (\ref{stokes}) and $u|_{\partial\Omega}=g$.
We could define the Cauchy data for the Stokes equations (\ref{stokes}) by
$$
     C_{\mu}=\left\{ \left(u, \sigma(u,p)\textbf{n}\right)|_{\partial\Omega}:
      (u,p)\ \mbox{satisfies (\ref{stokes})}  \right\}.
$$
The inverse problems we consider in this paper is to determine $\mu$ from the knowledge of the Cauchy data $C_\mu$.
Recently, Imanuvilov and Yamamoto \cite{IY}  studied the same inverse problem with the Dirichlet-to-Neumann (DN) map defined by
$$
\Lambda_\mu(g)=\left(\frac{\partial u}{\partial\nu}, p\right)\Big|_{\partial\Omega}.
$$
They showed that the knowledge of the DN map uniquely determines the viscosity $\mu$ of the Navier-Stokes equations. Unlike their boundary measurements, we use Cauchy data $\left(u, \sigma(u,p)\textbf{n}\right)|_{\partial\Omega}$ to deduce the uniqueness of $\mu$. The physical sense of $\sigma(u,p)\textbf{n}$ is the stress
acting on $\partial\Omega$ and is called the Cauchy force. Mathematically, the pressure function $p$ plays the role of the Lagrange multiplier corresponding to the incompressibility condition. The information of $p$ on $\partial\Omega$ is coupled in the Cauchy force. Given the measurement of $p$ alone on $\partial\Omega$ is unnatural.

The main result of this paper is the following global uniqueness result. Note that the following theorem also holds for the Navier-Stokes equations.

\begin{theorem}\label{main}
     Let $\Omega$ be a simply connected bounded domain in $\mathbb{R}^2$ with smooth boundary. Suppose that $\mu_1$ and $\mu_2$ are two viscosity functions for the Stokes equations. Assume that $\mu_j\in C^3(\overline\Omega)$ and $\mu_j>0$ with
     \begin{align*}
     \partial^\alpha\mu_1|_{\partial\Omega}=\partial^\alpha\mu_2|_{\partial\Omega}\ \ \hbox{for all $|\alpha|\leq 1$.}
     \end{align*}
     Let $C_{\mu_1}$ and $C_{\mu_2}$ be the Cauchy data associated with $\mu_1$ and $\mu_2$, respectively. If $ C_{\mu_1}=C_{\mu_2}$, then
$
\mu_1=\mu_2
$
in $\Omega$.
\end{theorem}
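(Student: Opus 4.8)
The plan is to eliminate the pressure, reduce the Stokes system to a single fourth-order scalar equation, extract an interior integral identity from the equality of Cauchy data, and then insert complex geometric optics (CGO) solutions to recover $\mu$ pointwise.

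\emph{Reduction and the integral identity.} Since $\diver u=0$ and $\Omega$ is simply connected, I would introduce the stream function $\psi$ with $u=\nabla^\perp\psi=(-\partial_2\psi,\partial_1\psi)$. Writing out $\diver\sigma(u,p)=\mu\Delta u+2\varepsilon(u)\nabla\mu-\nabla p$ and applying the two-dimensional scalar curl to kill $\nabla p$, the momentum equation becomes the fourth-order equation
\begin{equation*}
\diver(\mu\nabla\Delta\psi)+\operatorname{curl}\!\big(2\varepsilon(u)\nabla\mu\big)=0,
\end{equation*}
whose principal part is $\mu\Delta^2$. Independently, testing $\diver\sigma(u_1,p_1)=0$ against a divergence-free Stokes solution $v$ with viscosity $\mu_2$ and integrating by parts gives $\int_\Omega 2\mu_1\varepsilon(u_1):\varepsilon(v)=\int_{\partial\Omega}(\sigma_1\mathbf{n})\cdot v$. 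Given $C_{\mu_1}=C_{\mu_2}$, for each boundary datum $g$ I can pick solutions $u_1$ (for $\mu_1$) and $u_2$ (for $\mu_2$) with $u_1=u_2=g$ and $\sigma_1\mathbf{n}=\sigma_2\mathbf{n}$ on $\partial\Omega$; since $w=u_1-u_2$ is divergence-free and vanishes on $\partial\Omega$, the term $\int 2\mu_2\varepsilon(w):\varepsilon(v)$ drops out, and subtracting the two weak formulations yields the key identity
\begin{equation*}
\int_\Omega(\mu_1-\mu_2)\,\varepsilon(u_1):\varepsilon(v)\,dx=0
\end{equation*}
for all Stokes solutions $u_1,v$ associated with $\mu_1,\mu_2$. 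The ambiguity of $p$ up to a constant is harmless, since $\int_{\partial\Omega}\mathbf{n}\cdot v\,dS=\int_\Omega\diver v=0$.

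\emph{CGO solutions and recovery.} The hypothesis $\partial^\alpha\mu_1=\partial^\alpha\mu_2$ on $\partial\Omega$ for $|\alpha|\le1$ lets me extend $\mu_1,\mu_2$ to agree outside $\Omega$, so $\mu_1-\mu_2$ is compactly supported. Using the stream-function formulation I would construct CGO solutions whose stream functions have the form $\psi=e^{\tau\Phi}(a+r_1)$ and $\phi=e^{-\tau\bar\Phi}(b+r_2)$, with $\Phi$ holomorphic, $a,b$ solving the associated transport equations and $r_1,r_2=O(\tau^{-1})$ controlled via a Carleman estimate for the operator $\diver(\mu\nabla\Delta\,\cdot\,)$. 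A computation in the complex derivatives $\partial=\partial_z,\ \bar\partial=\partial_{\bar z}$ gives $\varepsilon(u):\varepsilon(v)=4\big(\partial^2\psi\,\bar\partial^2\phi+\bar\partial^2\psi\,\partial^2\phi\big)$, whose leading term is $4\tau^4|\Phi'|^4\,ab\,e^{2i\tau\operatorname{Im}\Phi}$. Choosing the Bukhgeim phase $\Phi=(z-z_0)^2$, whose imaginary part has a nondegenerate critical point at $z_0$, I would substitute into the identity, renormalize by the appropriate power of $\tau$, and apply stationary phase; as $\tau\to\infty$ this isolates $(\mu_1-\mu_2)(z_0)$, where the vanishing of $|\Phi'|^4$ at $z_0$ is absorbed by reading off the corresponding higher term of the asymptotic expansion. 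Letting $z_0$ range over $\Omega$ then forces $\mu_1=\mu_2$. For the Navier--Stokes equations the convective term is treated as a lower-order perturbation, so the same scheme applies once existence and uniqueness for the boundary value problem are in hand.

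\emph{Main obstacle.} I expect the crux to be the construction of the CGO solutions for the fourth-order operator: unlike the scalar Schr\"odinger case, the perturbation $\operatorname{curl}(2\varepsilon(u)\nabla\mu)$ is third order, only one derivative below the principal part, so the remainders $r_1,r_2$ must be controlled in a norm strong enough to survive two differentiations in the quadratic form $\varepsilon(u):\varepsilon(v)$. This demands a Carleman estimate with a gain of derivatives (equivalently, a careful solvability theory for the conjugated operator $e^{-\tau\Phi}\diver(\mu\nabla\Delta\,\cdot\,)e^{\tau\Phi}$), together with a transport analysis guaranteeing that the amplitudes $a,b$ do not vanish at $z_0$. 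A secondary difficulty is the stationary-phase bookkeeping, since the natural weight $|\Phi'|^4$ degenerates at the critical point and the leading contribution comes from a higher term of the expansion.
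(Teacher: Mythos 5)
Your first half is correct, and your route is genuinely different from the paper's. The stream-function reduction, the orthogonality relation
\begin{equation*}
\int_\Omega(\mu_1-\mu_2)\,\varepsilon(u_1):\varepsilon(v)\,dx=0
\end{equation*}
(the term $\int_\Omega 2\mu_2\,\varepsilon(w):\varepsilon(v)\,dx$ does vanish, since $w=u_1-u_2$ is divergence free and vanishes on $\partial\Omega$ while $v$ solves the $\mu_2$-Stokes system), and the algebraic identity $\varepsilon(u):\varepsilon(v)=4\bigl(\partial_z^2\psi\,\partial_{\bar z}^2\phi+\partial_{\bar z}^2\psi\,\partial_z^2\phi\bigr)$ all check out; the last one extends to complex stream functions by bilinearity. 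The paper proceeds entirely differently: it passes to the Airy stress function, proves equivalence of the Stokes Cauchy data with that of the plate-like equation $P_\mu(\phi)=0$, rewrites that fourth-order equation as a first-order $4\times 4$ system $(D+V)U=0$ with potential built from $\alpha=\mu\partial_{\bar z}(1/\mu)$ and $\beta=\tfrac{\mu}{2}\partial_{\bar z}^2(1/\mu)$, and then invokes the uniqueness theorem of \cite{AGTU} for such systems, so that no CGO construction or stationary phase is needed; a short gauge computation then yields $(\nabla\log\mu_1)^2=(\nabla\log\mu_2)^2$ and hence $\mu_1=\mu_2$ from the boundary agreement. Your plan is essentially an adaptation of the Imanuvilov--Yamamoto strategy \cite{IY} to Cauchy-force data.

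However, the CGO half of your proposal has a genuine, quantitative gap. With $\Phi=(z-z_0)^2$, stationary phase applied to the main term gives
\begin{equation*}
4\tau^4\int_\Omega(\mu_1-\mu_2)\,|\Phi'|^4\,ab\,e^{2i\tau\operatorname{Im}\Phi}\,dx
= c\,\tau\,(\mu_1-\mu_2)(z_0)\,a(z_0)b(z_0)+o(\tau),
\end{equation*}
i.e.\ a signal of size $\tau$ (the weight $|\Phi'|^4$ vanishes to fourth order at $z_0$, which costs $\tau^{-2}$ relative to the leading stationary-phase term, exactly as you say). But the worst remainder cross term,
\begin{equation*}
4\tau^4\int_\Omega(\mu_1-\mu_2)\,|\Phi'|^4\,r_1\,b\,e^{2i\tau\operatorname{Im}\Phi}\,dx,
\end{equation*}
is only $O(\tau^3)$ under the bound you state, $\|r_1\|_{L^2}=O(\tau^{-1})$: two full powers of $\tau$ larger than the signal, and no oscillation can be exploited because $r_1$ carries no known phase. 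You would need $\|r_1\|_{L^2}=o(\tau^{-3})$, together with $o(\tau^{-1})$ control of two (semiclassical) derivatives of $r_1$, which also enter the quadratic form, or else remainders with explicit oscillatory structure plus genuinely small errors. Constructing these for the conjugated operator --- whose $\tau^2$-leading part is $(\Phi')^2\bigl(\partial_{\bar z}^2+\alpha\partial_{\bar z}+\beta\bigr)$, with the weight $\Phi'$ degenerating exactly at the stationary point --- is the hard analytic core of \cite{IY}; invoking ``a Carleman estimate with a gain of derivatives'' names the missing ingredient but does not supply it, and with only the stated bounds the limit $\tau\to\infty$ does not isolate $(\mu_1-\mu_2)(z_0)$. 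A secondary gap: the Navier--Stokes extension is not a ``lower-order perturbation'' argument; the paper (following \cite{LW}, \cite{is93}) uses a linearization $u_\varepsilon=\varepsilon v_0+\varepsilon^2 v$ with a separate existence and asymptotics theorem to show $\tilde C_{\mu_1}=\tilde C_{\mu_2}$ implies $C_{\mu_1}=C_{\mu_2}$.
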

In higher dimensions, the global uniqueness of identifying the viscosity using the Cauchy data has been well studied.
For the Stokes equations, the uniqueness for the inverse boundary problem was established by Heck, Li and Wang \cite{HLW} in dimension three.
In \cite{LW}, Li and Wang proved the unique determination of $\mu$ for the Navier-Stokes equations in dimension three. To study the Navier-Stokes equations they applied the linearization technique due to Isakov \cite{is93}. The idea is to reduce the semilinear inverse boundary value problem to the corresponding linear one. When applying the linearization method to the Navier-Stokes equation, the difficulty is to show the existence of particular solutions to the Navier-Stokes equations with certain controlled asymptotic properties. The idea used in \cite{LW} is independent of the spatial dimension. It works for the two-dimensional case as well. We will briefly describe the result in Section \ref{sec4}.

Our first strategy for proving Theorem \ref{main} is to show that the inverse boundary value problem for the 2D Stokes equations and that for the thin plate-like are equivalent. The equivalence is known to hold for the 2D isotropic elastic equation and the thin plate equation. Recently, Kang, Milton and Wang \cite{KMW} gave explicit formulas showing that the Cauchy data of the elasticity system determines the Cauchy data of the thin plate equations, and vice versa (see also \cite{ik94}). Since the Stokes equations can be viewed as an elasticity system with incompressibility, we can prove a similar equivalence by using the similar arguments in \cite{KMW}.
Having established the equivalence of two inverse boundary value problems, we then transform the thin plate equations into a first order system. Albin, Guillarmou, Tzou and Uhlmann \cite{AGTU} showed that the Cauchy data of the first order system $D+V$ uniquely determine $V$ if $V$ is diagonal, where $D$ is an operator with $\partial$ or $\bar\partial$ at its diagonal. When $V$ is not diagonal, they reduced it to the diagonal case so that the similar result holds for the non-diagonal one. For the Stokes equation, the potential $V$ contains the function $\mu$ up to the second order derivative, we apply their result and the assumption on the boundary of $\mu$ to deduce the global uniqueness.

The paper is organized as follows. We show the equivalence of the inverse boundary problems for the thin plate-like
and for the Stokes equations in Section \ref{sec2}. In Section \ref{sec3}, we derive a first order system from the thin plate-like equations. We then show that the Cauchy data of the first order system uniquely determines the viscosity $\mu$. In Section \ref{sec4}, we study the same inverse problem for the Navier-Stokes equations.


\section{Equivalence of boundary data for the plate and for the 2D Stokes equations}\label{sec2}
In this section we would like to connect the inverse boundary value problem for the thin plate equations to that for the Stokes equations. We define the 4-th order tensor $\mathcal{R}$ by
$$
\mathcal{R}M=R^T_\perp MR_\perp
$$
for any $2\times 2$ matrix $M$, where
$$
     R_\perp=\left(
               \begin{array}{cc}
                 0 & 1 \\
                 -1 & 0 \\
               \end{array}
             \right).
$$
Hereafter, for any function $u$, the notation $u_{,j}$ means the
derivative of $u$ with respect to $x_j$, $j=1,2$. Denote $\sigma=(\sigma_{ij})$.Componentwise, the first equation of (\ref{stokes}) is equivalent to
$$
       \sigma_{11,1}+\sigma_{12,2}=0,\ \ \sigma_{21,1}+\sigma_{22,2}=0.
$$
It follows that there exist potentials $\psi_1$ and $\psi_2$ such that
\begin{align}\label{sigma}
     \sigma_{11}=\psi_{1,2},\ \ \sigma_{12}=-\psi_{1,1},\ \ \sigma_{21}=\psi_{2,2},\ \ \sigma_{22}=-\psi_{2,1}.
\end{align}
Since $\sigma_{12}=\sigma_{21}$, we have
$$
      \psi_{1,1}+\psi_{2,2}=0.
$$
Thus there exists a potential $\phi$ such that
\begin{align}\label{airy}
     \psi_1=\phi_{,2},\ \ \psi_2=-\phi_{,1}.
\end{align}
The potential $\phi$ is called the Airy stress function. Substituting (\ref{airy}) into (\ref{sigma}), we see that
\begin{align}\label{vare}
    \sigma=\mathcal{R}\nabla^2\phi
    = \left(
                    \begin{array}{cc}
                      \phi_{,22} & -\phi_{,12} \\
                      -\phi_{,21} & \phi_{,11} \\
                    \end{array}
      \right),
\end{align}
where $\nabla^2\phi$ denotes the Hessian of $\phi$, i.e.,
$$
     \nabla^2\phi=\left(
                    \begin{array}{cc}
                      \phi_{,11} & \phi_{,12} \\
                      \phi_{,21} & \phi_{,22} \\
                    \end{array}
                  \right).
$$

In light of $\sigma=2\mu\varepsilon-pI_2$ and (\ref{vare}), we get
\begin{align}\label{vare1}
     \varepsilon=\frac{1}{2\mu} \left(\mathcal{R}\nabla^2\phi+pI_2\right).
\end{align}
The divergence-free condition $\diver u=0$ implies that
$$
     0=u_{1,1}+u_{2,2}=\trace(\varepsilon)=\frac{1}{2\mu} \left(\Delta\phi+2p\right),
$$
thus
\begin{align}\label{p}
     p=-\frac{\Delta\phi}{2}.
\end{align}
Note that the physical significance of the pressure $p$ is that $-p$ is the mean of the two normal stresses
at a point, that is,
$$
   p=-\frac{1}{2}(\sigma_{11}+\sigma_{22}).
$$

From (\ref{vare1}) and (\ref{p}), it follows that
\begin{align}\label{divdiv}
0=\diver\diver(\mathcal{R}\varepsilon)
&=\diver\diver\left(\frac{1}{2\mu}
\mathcal{R}\left(\mathcal{R}\nabla^2\phi+pI_2\right)\right)\notag \\
&=\diver\diver\left(\frac{1}{2\mu}
\left(\nabla^2\phi-\frac{\Delta\phi}{2} I_2\right)\right)\notag \\
&=\diver\diver\left(\frac{1}{2\mu}
\left(
  \begin{array}{cc}
    \frac{1}{2}(\phi_{,11}-\phi_{,22}) & \phi_{,12} \\
    \phi_{,12} & \frac{1}{2}(\phi_{,22}-\phi_{,11})  \\
  \end{array}
\right)
\right).
\end{align}
Conversely, if $\diver\diver(\mathcal{R}\varepsilon)=0$, then
$$
     \varepsilon_{22,11}+\varepsilon_{11,22}-2\varepsilon_{12,12}=0,
$$
where $\varepsilon=(\varepsilon_{ij})$.
If $\Omega$ is simply connected, then there exists a function $u$ such that $\varepsilon=\left((\nabla u)+(\nabla u)^T\right)/2.$
(For the proof of the existence of such function $u$, we refer to \cite{F}, page 99-103). Based on (\ref{divdiv}), the function $u$ also satisfies $\diver u=0$. Let $p=-\Delta \phi/2$, then $(u,p)$ satisfies
the Stokes equations (\ref{stokes}).
Thus we have proved that the two systems (\ref{stokes}) and (\ref{divdiv}) are equivalent if $\Omega$ is simply
connected.

Next we would like to discuss the equivalence of the Cauchy data. We define the operator $P_\mu(\phi)$ by
\begin{align*}
P_\mu(\phi):=\diver\diver\left(\frac{1}{2\mu}
\left(
  \begin{array}{cc}
    \frac{1}{2}(\phi_{,11}-\phi_{,22}) & \phi_{,12} \\
    \phi_{,12} & \frac{1}{2}(\phi_{,22}-\phi_{,11})  \\
  \end{array}
\right)
\right)
\end{align*}
and denote $u_{,n}=\nabla u\cdot
\textbf{n}$ and $u_{,t}=\nabla u\cdot \textbf{t}$, where $\textbf{n}=(n_1,n_2)$ is the unit normal and
$\textbf{t}=(-n_2,n_1)=R_{\perp}^T\textbf{n}$ is the unit tangent vector field along
$\partial\Omega$ in the positive orientation. The Dirichlet data associated with (\ref{divdiv}) is described by
the pair $\{\phi,\phi_{,n}\}$ and the Neumann data by the pair
\begin{align*}
     M_n&:=\textbf{n}\cdot\left(\frac{1}{2\mu}
\left(\nabla^2\phi-\frac{\Delta\phi}{2} I_2\right)\right)\textbf{n},\\
     (M_t)_{,t}&:= \diver \left(\frac{1}{2\mu}
\left(\nabla^2\phi-\frac{\Delta\phi}{2} I_2\right)\right)\cdot\textbf{n}
+\left(\textbf{t}\cdot\frac{1}{2\mu}
\left(\nabla^2\phi-\frac{\Delta\phi}{2} I_2\right)\textbf{n}\right)_{,t}.
\end{align*}
We define the Cauchy data for (\ref{divdiv}) by
$$
   C^*_{\mu}=\left\{ \left(\phi,\phi_{,n},M_n,(M_t)_{,t}\right)|_{\partial\Omega}:
          \phi\in H^4(\Omega), P_\mu(\phi)=0 \right\}.
$$

We now adopt the arguments used in \cite{KMW} to show that $\sigma \textbf{n}|_{\partial\Omega}$ determines
$\{\phi,\phi_{,n}\}$ on $\partial\Omega$ and $u|_{\partial\Omega}$ determines $\{M_n,(M_t)_{,t}\}$ on $\partial\Omega$, and vice versa.
Therefore, the Cauchy data $C_\mu$ for the Stokes equations and the Cauchy data $C^*_\mu$ for (\ref{divdiv}) are equivalent. Assume for the moment that $u\in C^{2+\alpha}(\overline\Omega)$ for some $\alpha\in(0,1)$.
It follows from (\ref{vare}) that
\begin{align}\label{rt}
    R^T_\perp\sigma\textbf{n}= (\nabla^2\phi) \textbf{t}=\left(
  \begin{array}{cc}
    \nabla \phi_{,1}\cdot\textbf{t} \\
    \nabla \phi_{,2}\cdot\textbf{t} \\
  \end{array}
\right).
\end{align}
For $j=1,2$, we integrate $ \nabla \phi_{,j}\cdot\textbf{t}$ along $\partial\Omega$ from some point $x_0\in\partial\Omega$,
we recover $\nabla\phi$ (up to a constant) on $\partial\Omega$. Hence $\phi_{,n}$ and $\phi_{,t}$ are recovered. We integrate $\phi_{,t}$ along $\partial\Omega$, then $\phi$ on $\partial\Omega$ is known (also up to a constant). The appearance of integrating constants is evident from \eqref{divdiv}.  In other words, the traction $\sigma\textbf{n}$ uniquely determines the Dirichlet data $\phi$ and $\phi_{,n}$. On the other hand, if $\phi$ and $\phi_{,n}$ are given, then $\nabla\phi$ is known. Hence, the boundary traction $\sigma\textbf{n}$ is recovered via (\ref{rt}).

To show that $M_n$ and $(M_t)_{,t}$ can be recovered from $u$.
Since $\varepsilon=((\nabla u)+(\nabla u)^T)/2$, we get that
\begin{align*}
    \mathcal{R}\varepsilon=R^T_\perp\varepsilon R_\perp=
    \left(
      \begin{array}{cccc}
        u_{2,2} & -\frac{1}{2}(u_{2,1}+u_{1,2}) \\
        -\frac{1}{2}(u_{2,1}+u_{1,2}) & u_{1,1}  \\
      \end{array}
    \right)
\end{align*}
and thus
\begin{align*}
    \diver{(\mathcal{R}\varepsilon)}&=
    \left(
      \begin{array}{cccc}
        u_{2,21}  -\frac{1}{2}(u_{2,12}+u_{1,22}) \\
        -\frac{1}{2}(u_{2,11}+u_{1,21}) +u_{1,12}  \\
      \end{array}
    \right)\\
    &=\frac{1}{2}\left(
      \begin{array}{cccc}
        u_{2,12} -u_{1,22} \\
        u_{1,12}-u_{2,11} \\
      \end{array}
    \right)
    =\frac{1}{2}R^T_{\perp}
    \nabla
    \left(
        u_{1,2} -u_{2,1}
    \right).
\end{align*}
Consequently, we obtain
\begin{align*}
   \diver{(\mathcal{R}\varepsilon)}\cdot \textbf{n} &=
   \frac{1}{2}(R_{\perp}\textbf{n}) \cdot\nabla
    \left(
        u_{1,2} -u_{2,1}
    \right)=
   -\frac{1}{2}\textbf{t}\cdot
    \nabla
    \left(
        u_{1,2} -u_{2,1}
    \right)\notag\\
    &= \frac{1}{2}\left(
        u_{2,1} -u_{1,2}
    \right)_{,t}.
\end{align*}
Recall that $(M_t)_{,t}=\diver(\mathcal{R}\varepsilon)\cdot \textbf{n}+(\textbf{t}\cdot (\mathcal{R}\varepsilon)\textbf{n})_{,t}$ and therefore
\begin{align}\label{Mtt}
     (M_t)_{,t}=\frac{1}{2}\left(
        u_{2,1} -u_{1,2}
    \right)_{,t}+(\textbf{t}\cdot (\mathcal{R}\varepsilon)\textbf{n})_{,t}.
\end{align}
Integrating (\ref{Mtt}) along $\partial\Omega$ from some point $x_0\in\partial\Omega$ and choosing an appropriate $(u_{2,1} -u_{1,2})(x_0)$, we obtain
\begin{align}\label{mt:R}
     M_t=\frac{1}{2}\left(
        u_{2,1} -u_{1,2}    \right)+ \textbf{t}\cdot (\mathcal{R}\varepsilon)\textbf{n}.
\end{align}
We observe that
\begin{align*}
     \frac{1}{2}\left(
        u_{2,1} -u_{1,2}    \right)
        =\left(R^T_\perp
        \left(
          \begin{array}{cc}
            0 & \frac{1}{2}(u_{1,2}-u_{2,1}) \\
            \frac{1}{2}(u_{2,1}-u_{1,2}) & 0 \\
          \end{array}
        \right)
        R_\perp\textbf{n}\right)\cdot\textbf{t}.
\end{align*}
The second term on the right side of (\ref{mt:R}) can be written as
$$
   \textbf{t}\cdot (\mathcal{R}\varepsilon)\textbf{n}=
   \left(R^T_\perp
        \left(
          \begin{array}{cc}
            u_{1,1} & \frac{1}{2}(u_{1,2}+u_{2,1}) \\
            \frac{1}{2}(u_{1,2}+u_{2,1}) & u_{2,2} \\
          \end{array}
        \right)
        R_\perp\textbf{n}\right)\cdot\textbf{t}.
$$
Thus we have
\begin{align}\label{mt}
  M_t&=\left( R^T_\perp
        \left(
          \begin{array}{cc}
            u_{1,1} & u_{1,2} \\
           u_{2,1} & u_{2,2} \\
          \end{array}
        \right)
        R_\perp\textbf{n}\right)\cdot\textbf{t}\notag\\
        &= -R^T_\perp( \nabla u)\textbf{t}\cdot\textbf{t}\notag\\
        &=-\textbf{n}\cdot( \nabla u)\textbf{t}.
\end{align}
Moreover, using the definition of $M_n$, we get
\begin{align}\label{mn}
   M_n=\textbf{n}\cdot \mathcal{R}\varepsilon\textbf{n}=\textbf{n}\cdot R^T_\perp \varepsilon R_\perp \textbf{n}=\textbf{t}\cdot( \nabla u)\textbf{t}.
\end{align}
From (\ref{mt}) and (\ref{mn}), we deduce that
\begin{align}\label{mnt}
     u_{,t}=-M_t\textbf{n}+M_n\textbf{t},
\end{align}
which implies the Neumann data $M_n$ and $M_t$ can be recovered from $u_{,t}$.
On the other hand, we use the formula (\ref{mnt}) and integrate $-M_t\textbf{n}+M_n\textbf{t}$ along $\partial\Omega$. Thus, the velocity field $u$ is determined.

By a density argument the above discussion holds for the slightly relaxed regularity assumption on the boundary data $g\in H^{3/2}(\partial\Omega)$. Hence, we can remove the assumption that $u\in C^{2+\alpha}(\overline\Omega)$. We therefore conclude that knowing the Cauchy data of the Stokes equations is equivalent to knowing that of the thin plate-like equations \eqref{divdiv}.

\section{Global uniqueness for the Stokes equations}\label{sec3}

From the previous section, we have concluded that to study the inverse boundary value for the Stokes equations \eqref{stokes}, it suffices to consider the same question for the plate-like equation \eqref{divdiv}. Our strategy now is to deduce a first order system $DU+VU=0$ from \eqref{divdiv}. The most nontrivial property that we will show is that $C_{\mu}^{\ast}$ determines the Cauchy data of the first order system $DU+VU=0$. Having obtained this result, the global identifiability of $\mu$ for the Stokes equations is reduced to the uniqueness problem for this first order system. The global uniqueness of the inverse boundary value problem for such a first order system was recently studied by Albin, Guillarmou, Tzou and Uhlmann in \cite{AGTU}. Consequently, the proof of the uniqueness question for the Stokes equations follows from their result.

\subsection{$(\partial^2_{\overline z},\partial^2_z)$ system}\label{3.1}
As usual, we define $z=x+iy$,
$$
     \partial_z=\frac{1}{2}\left(\frac{\partial}{\partial x}-i\frac{\partial}{\partial y}\right),
\ \ \ \partial_{\overline z}=\frac{1}{2}\left(\frac{\partial}{\partial x}+i\frac{\partial}{\partial y}\right).
$$
The complex version of Gauss integral formulas are given by
\begin{align}\label{Gauss}
     \int_\Omega \partial_{\overline z} w(z) dxdy=\frac{1}{2i}\int_{\partial\Omega} w(z)dz, \ \
     \int_\Omega \partial_z w(z) dxdy=-\frac{1}{2i}\int_{\partial\Omega} w(z)d\overline z
\end{align}
for $w\in C^1(\Omega)\cap C(\overline\Omega)$ lead to the Cauchy
Pompeiu representations
\begin{align}
   w(z)=\frac{1}{2\pi i}\int_{\partial\Omega} w(\zeta)\frac{d\zeta}{\zeta-z}-\frac{1}{\pi}\int_{\Omega} \partial_{\overline\zeta} w(\zeta)\frac{d\xi d\eta}{\zeta-z},\ \ z\in\Omega,\label{Pomp1}\\
   w(z)=-\frac{1}{2\pi i}\int_{\partial\Omega} w(\zeta)\frac{d\overline\zeta}{\overline{\zeta-z}}-\frac{1}{\pi}\int_{\Omega} \partial_{\zeta} w(\zeta)\frac{d\xi d\eta}{\overline{\zeta-z}},\ \ z\in\Omega,\label{Pomp2}
\end{align}
where $\zeta=\xi+i\eta$.  Iterations of these formulas give the following
higher order representations
\begin{align}\label{hpomp}
   w(z)&=\frac{1}{2\pi i}\int_{\partial\Omega} w(\zeta)\frac{d\zeta}{\zeta-z}-\frac{1}{2\pi i}\int_{\partial\Omega} \partial_{\overline\zeta} w(\zeta)\frac{\overline{\zeta-z}}{\zeta-z}d\zeta
+\frac{1}{\pi}\int_{\Omega} \partial^2_{\overline\zeta} w(\zeta)\frac{\overline{\zeta-z}}{\zeta-z}d\xi d\eta
\end{align}
and
\begin{align}\label{hpomp2}
   w(z)&=-\frac{1}{2\pi i}\int_{\partial\Omega} w(\zeta)\frac{d\overline\zeta}{\overline{\zeta-z}}+\frac{1}{2\pi i}\int_{\partial\Omega} \partial_{\zeta} w(\zeta)\frac{\zeta-z}{\overline{\zeta-z}}d\overline\zeta
+\frac{1}{\pi}\int_{\Omega} \partial^2_{\zeta} w(\zeta)\frac{\zeta-z}{\overline{\zeta-z}}d\xi d\eta.
\end{align}
for $w\in C^2(\Omega)\cap C^1(\overline\Omega)$ (see \cite[Page 272]{B}). In the sequel, we need a technical lemma.
\begin{lemma}\label{lem3.1}
     Let $\Omega$ be an open bounded domain in $\mathbb{C}$ and $f\in C^k(\overline\Omega)$ for $k\geq 2$. Define
     $$
          u(z)=\frac{1}{\pi}\int_{\Omega} f(\zeta)\frac{\overline{z-\zeta}}{z-\zeta}d\xi d\eta
     $$
     Then $u(z)$ is in $C^k(\Omega)$ and
     satisfies
     \begin{equation}\label{1422}
        \partial_{\overline z}^2 u(z)=f(z)
     \end{equation}
     in $\Omega$. Likewise, if we define
      $$
          u(z)=\frac{1}{\pi}\int_{\Omega} f(\zeta)\frac{z-\zeta}{\overline{z-\zeta}}d\xi d\eta,
     $$
     then $u(z)$ is in $C^k(\Omega)$ and
     satisfies
     $$
        \partial_{z}^2 u(z)=f(z)
     $$
      in $\Omega$.
\end{lemma}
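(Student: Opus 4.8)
The plan is to prove the first assertion; the second is obtained verbatim after interchanging the roles of $z$ and $\overline z$ (equivalently, by conjugating the whole argument), so I only discuss $u(z)=\frac{1}{\pi}\int_\Omega f(\zeta)\frac{\overline{z-\zeta}}{z-\zeta}\,d\xi d\eta$, whose kernel $K(w):=\overline w/w$ satisfies $|K|\equiv1$. For the regularity $u\in C^k(\Omega)$ I would localize. Fix $z_0\in\Omega$, choose $\chi\in C_c^\infty(\Omega)$ equal to $1$ near $z_0$, and split $f=f\chi+f(1-\chi)$, giving $u=u_{\mathrm{near}}+u_{\mathrm{far}}$. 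Since $K$ is bounded, hence locally integrable, $u_{\mathrm{near}}=\tfrac1\pi K*(f\chi)$ is a convolution of $K$ with $f\chi\in C_c^k(\mathbb C)$; placing derivatives on the smooth factor gives $\partial^\alpha u_{\mathrm{near}}=\tfrac1\pi K*\partial^\alpha(f\chi)$, continuous for $|\alpha|\le k$, so $u_{\mathrm{near}}\in C^k$. In $u_{\mathrm{far}}$ the integrand is supported where $|z-\zeta|$ is bounded below for $z$ near $z_0$, so the kernel is smooth in $z$ there and one differentiates under the integral freely, giving $u_{\mathrm{far}}\in C^\infty$ near $z_0$. As $z_0$ is arbitrary, $u\in C^k(\Omega)$. (Only $C^k$, with no gain of derivatives, is asserted, consistent with the fact that the Cauchy transform does not gain a full integer derivative on the $C^k$ scale.)

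For the equation, the cleanest route uses the representation \eqref{hpomp}, which already contains our integral: since $\frac{\overline{\zeta-z}}{\zeta-z}=\frac{\overline{z-\zeta}}{z-\zeta}$, its area term is exactly $\tfrac1\pi\int_\Omega\partial_{\overline\zeta}^2w\,\frac{\overline{z-\zeta}}{z-\zeta}\,d\xi d\eta$. Using the classical theory of the Cauchy transform (Vekua; the reference \cite{B} invoked for \eqref{hpomp}), I would first construct one function $w$ with $\partial_{\overline\zeta}^2 w=f$ in $\Omega$ and $w\in C^2(\Omega)\cap C^1(\overline\Omega)$, concretely $w=P(Pf)$ with $Pg(z)=\tfrac1\pi\int_\Omega g(\zeta)(z-\zeta)^{-1}d\xi d\eta$; here $f\in C^k(\overline\Omega)$ with $k\ge2$ supplies the regularity needed to apply \eqref{hpomp}. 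Writing \eqref{hpomp} as $w=B_1+B_2+u$ with
\[
B_1(z)=\frac{1}{2\pi i}\int_{\partial\Omega}w(\zeta)\frac{d\zeta}{\zeta-z},\qquad
B_2(z)=-\frac{1}{2\pi i}\int_{\partial\Omega}\partial_{\overline\zeta}w(\zeta)\frac{\overline{\zeta-z}}{\zeta-z}\,d\zeta,
\]
I then observe that $\partial_{\overline z}^2(B_1+B_2)=0$: the Cauchy-type integral $B_1$ is holomorphic in $z\in\Omega$, and since $\partial_{\overline z}\frac{\overline{\zeta-z}}{\zeta-z}=-\frac{1}{\zeta-z}$ for $\zeta\neq z$ one gets $\partial_{\overline z}B_2(z)=\frac{1}{2\pi i}\int_{\partial\Omega}\partial_{\overline\zeta}w(\zeta)\frac{d\zeta}{\zeta-z}$, again holomorphic. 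Applying $\partial_{\overline z}^2$ to $w=B_1+B_2+u$ therefore yields $f=\partial_{\overline z}^2 w=\partial_{\overline z}^2 u$, which is \eqref{1422}.

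The one delicate point is differentiating a weakly singular area integral through its singularity, and the representation route is attractive precisely because it shifts every differentiation onto explicitly smooth objects: the auxiliary $w$ and the boundary integrals $B_1,B_2$, whose kernels are smooth for $z\in\Omega$, $\zeta\in\partial\Omega$, so that no singular area integral is differentiated directly. Thus the only remaining (routine but not vacuous) work is verifying that $w=P(Pf)\in C^2(\Omega)\cap C^1(\overline\Omega)$, which I would read off from the mapping properties of $P$ on the $C^k$/Hölder scale for $k\ge2$. As a more hands-on alternative, one can differentiate $u$ directly: the identity $\partial_{\overline z}\frac{\overline{z-\zeta}}{z-\zeta}=\frac{1}{z-\zeta}$ (with locally integrable right-hand side) suggests $\partial_{\overline z}u(z)=\tfrac1\pi\int_\Omega f(\zeta)(z-\zeta)^{-1}d\xi d\eta=:v(z)$, after which $\partial_{\overline z}v=f$ is the classical Cauchy-transform identity; here the differentiation past the singularity is justified by excising a fixed disk $D_\epsilon(z_0)$, differentiating the smooth exterior integral under the sign, and letting $\epsilon\to0$, the bound $|\overline w/w|=1$ forcing the disk contributions to vanish. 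I expect this singular-integral justification to be the main obstacle in the direct approach, which is exactly why I would favor the representation-formula argument above.
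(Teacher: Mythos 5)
Your proposal is correct, and while your regularity argument coincides with the paper's, your verification of $\partial_{\overline z}^2u=f$ takes a genuinely different route. The paper proves both claims inside one decomposition: it first treats $f\in C^k_0(\mathbb{C})$, where the change of variables $\zeta'=z-\zeta$ exhibits $u$ as a convolution, so differentiation under the integral sign gives $u\in C^k$, and then Gauss's formula (applied twice, with no boundary terms since $f$ has compact support) throws both $\partial_{\overline z}$ derivatives onto $f$; the identity then follows by applying \eqref{hpomp} to $f$ itself, whose boundary integrals vanish, yielding $\frac1\pi\int\partial^2_{\overline\zeta}f(\zeta)\frac{\overline{z-\zeta}}{z-\zeta}\,d\xi d\eta=f(z)$. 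The general case follows from the same cutoff splitting you use, since $\partial_{\overline z}^2u_2=0$ near $z_0$. You instead apply \eqref{hpomp} to an auxiliary solution $w=P(Pf)$ of $\partial^2_{\overline\zeta}w=f$, recognize $u$ as the area term (using $\frac{\overline{\zeta-z}}{\zeta-z}=\frac{\overline{z-\zeta}}{z-\zeta}$), and annihilate the two boundary integrals with $\partial_{\overline z}^2$, both becoming Cauchy-type (hence holomorphic) after one $\partial_{\overline z}$. This is a clean, valid alternative whose advantage is exactly what you say: no singular area integral is ever differentiated, since every derivative lands on smooth boundary kernels or on the explicitly constructed $w$. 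The price is that real content is outsourced to classical Vekua-type mapping properties of the Cauchy transform $P$ on the H\"older scale, needed both for $\partial_{\overline z}Pg=g$ and to ensure $w=P(Pf)\in C^2(\Omega)\cap C^1(\overline\Omega)$ so that \eqref{hpomp} applies; you correctly flag this as the remaining nontrivial step, but it should be carried out or precisely cited, since the lemma you are proving is itself a second-order cousin of that classical fact. The paper's route is more self-contained: it needs only differentiation under the integral sign for compactly supported smooth densities plus Gauss's formula, at the cost of manipulating the weakly singular kernel directly, which is harmless there precisely because of the convolution structure.
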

\begin{proof}
We adopt the proof of a similar result in \cite[Theorem~2.1.2]{CS} to our case here. We only prove the first part of the lemma, the other part is treated similarly. We first consider $f\in C^k_0(\mathbb{C})$. Changing variable $\zeta'=z-\zeta$ in $u$ and differentiation under the integral sign implies that $u\in C^k(\mathbb{C})$. To verify \eqref{1422}, we apply Gauss integral formula twice and (\ref{hpomp}) (note that $f$ is compactly supported). We get
\begin{align*}
\partial^2_{\overline z} u(z)=\frac{1}{\pi}\int_{\mathbb{C}}  f(\zeta)\partial^2_{\overline \zeta}\frac{\overline{z-\zeta}}{z-\zeta}d\xi d\eta
=\frac{1}{\pi}\int_{\mathbb{C}}  \partial^2_{\overline \zeta}f(\zeta)\frac{\overline{z-\zeta}}{z-\zeta}d\xi d\eta=f(z).
\end{align*}
For the general situation, let $z_0\in \Omega$ and $\chi\in C_0^\infty(\mathbb{C})$, $0\leq \chi\leq 1$, $\chi=1$ in some neighborhood $V$ of $z_0$ and $\supp \chi\subset \Omega$. Thus,
\begin{align*}
            u(z)&=\frac{1}{\pi}\int_{\Omega}  f(\zeta)\frac{\overline{z-\zeta}}{z-\zeta} d\xi d\eta\notag\\
            &=\frac{1}{\pi}\int_{\Omega} \chi f(\zeta)\frac{\overline{z-\zeta}}{z-\zeta} d\xi d\eta+\frac{1}{\pi}\int_{\Omega}  (1-\chi(\zeta))f(\zeta)\frac{\overline{z-\zeta}}{z-\zeta} d\xi d\eta\notag\\
            &=:u_1(z)+u_2(z).
\end{align*}
Since $\partial^2_{\overline z} u_2=0$ in $V$, from the previous argument for $\Omega=\mathbb{C}$, we have
$$
\partial^2_{\overline z} u(z)=\partial^2_{\overline z} u_1(z)+\partial^2_{\overline z} u_2(z)=\chi(z)f(z)=f(z)
$$
for $z\in V$.

\end{proof}

\begin{lemma}\label{partial}
 Let $\Omega$ be an open bounded domain with smooth boundary $\partial\Omega$.  Suppose that $f,g\in C^2(\Omega)\cap C^1(\overline\Omega)$. Suppose that the compatibility condition
  \begin{equation}\label{comp}
   \partial^2_{\overline z} f=\partial^2_z g\ \ \ \hbox{in $\Omega$}
  \end{equation}
  is satisfied. Then there exists a function $w\in C^2(\Omega)$ satisfies
  \begin{align}
   \left\{
    \begin{array}{cl}
     \partial^2_z w=f\ \ \ \hbox{in $\Omega$},\\
     \partial^2_{\overline z}  w=g\ \ \ \hbox{in $\Omega$}.
     \end{array}
     \right.
  \end{align}
\end{lemma}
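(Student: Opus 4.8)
The plan is to solve the two second-order equations successively: first produce a particular solution of $\partial_z^2 w=f$ by means of Lemma~\ref{lem3.1}, and then use the compatibility condition \eqref{comp} to correct it into a solution of the full system without spoiling the first equation. Concretely, I would apply the second assertion of Lemma~\ref{lem3.1} to $f$ to obtain
$$w_1(z)=\frac{1}{\pi}\int_\Omega f(\zeta)\frac{z-\zeta}{\overline{z-\zeta}}\,d\xi\,d\eta,$$
so that $\partial_z^2 w_1=f$ in $\Omega$ and $w_1\in C^2(\Omega)$. Only interior regularity of $w_1$ is needed, and the construction in the proof of Lemma~\ref{lem3.1} is local in $\Omega$ (via the cut-off $\chi$), so the weaker boundary regularity $f\in C^1(\overline\Omega)$, which still guarantees that the integral is well defined, causes no difficulty.

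Next I would measure the defect of $w_1$ in the second equation by setting $G:=g-\partial_{\overline z}^2 w_1$. Since constant-coefficient derivatives commute (even distributionally), $\partial_z^2 G=\partial_z^2 g-\partial_{\overline z}^2(\partial_z^2 w_1)=\partial_z^2 g-\partial_{\overline z}^2 f=0$ by \eqref{comp}. A priori $G$ is only continuous, but $\partial_z$ is elliptic, so $\partial_z^2 G=0$ forces $G\in C^\infty(\Omega)$. Putting $\chi:=\partial_z G$ and $G_0:=G-z\chi$, one checks that both are annihilated by $\partial_z$, hence antiholomorphic, and $G=G_0(\overline z)+z\,\chi(\overline z)$. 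Thus the failure of $w_1$ to solve the second equation lies entirely in the kernel of $\partial_z^2$.

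The decisive step is to find a correction $h$ with $\partial_z^2 h=0$ and $\partial_{\overline z}^2 h=G$, for then $w:=w_1+h$ satisfies both $\partial_z^2 w=f$ and $\partial_{\overline z}^2 w=g$, with $w\in C^2(\Omega)$ since $h$ is smooth. The constraint $\partial_z^2 h=0$ forces $h$ to have the form $h=C(\overline z)+z\,D(\overline z)$ with $C,D$ antiholomorphic, whence $\partial_{\overline z}^2 h=C''(\overline z)+z\,D''(\overline z)$, and matching this with $G$ reduces the problem to choosing antiholomorphic primitives $C,D$ with $C''=G_0$ and $D''=\chi$. This is precisely where the hypothesis \eqref{comp} is consumed, and it is the main obstacle: one must integrate the antiholomorphic functions $G_0,\chi$ twice, which requires that antiholomorphic functions on $\Omega$ admit single-valued primitives. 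This holds because the domain of interest is simply connected, and the existence of these primitives, rather than any regularity bookkeeping, is the crux of the argument.
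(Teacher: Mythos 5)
Your proof is correct in the paper's standing setting (where $\Omega$ is simply connected), and it takes a genuinely different route from the paper's. The paper proceeds by writing down one explicit ansatz for $w$: the two area potentials of Lemma~\ref{lem3.1} applied to $g$ and $f$, minus a double area integral, plus two boundary correctors $\phi_1,\phi_2$ built from $\log(z-\zeta)$, and then verifies both equations directly using the Cauchy--Pompeiu representations \eqref{hpomp}--\eqref{hpomp2} and Gauss's formula \eqref{Gauss}. Your argument is modular instead: Lemma~\ref{lem3.1} gives $w_1$ with $\partial_z^2w_1=f$ (your relaxation of its hypotheses to $f\in C^2(\Omega)\cap C^1(\overline\Omega)$ is legitimate, since the cut-off construction only needs interior $C^2$ regularity plus integrability, and the paper itself tacitly uses the same relaxation); the compatibility condition \eqref{comp}, together with commutation of distributional derivatives and ellipticity, shows the defect $G=g-\partial_{\overline z}^2w_1$ lies in the kernel of $\partial_z^2$, hence $G=G_0(\overline z)+z\chi(\overline z)$ with $G_0,\chi$ antiholomorphic; and the corrector $h=C(\overline z)+zD(\overline z)$ is produced by taking antiholomorphic second primitives $C''=G_0$, $D''=\chi$. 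One small misattribution: \eqref{comp} is actually consumed at the step $\partial_z^2G=0$, while the primitive step consumes simple connectivity --- and that second point is worth emphasizing, because simple connectivity appears nowhere in the lemma's statement (only in the paper's standing assumptions), yet it cannot be dispensed with: on an annulus, $f=0$ and $g=1/\overline z$ satisfy \eqref{comp}, but any solution would force an antiholomorphic $C$ with $\partial_{\overline z}^2C=1/\overline z$, i.e., after conjugation a single-valued primitive of $1/z$, which does not exist. Your proof makes this hypothesis visible, whereas the paper's proof hides the same requirement in the claim that single-valued branches of $\log(z-\zeta)$, $\zeta\in\partial\Omega$, can be chosen on $\Omega$. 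What the paper's heavier computation buys in exchange is an explicit representation formula for $w$ in terms of $f$, $g$ and their boundary values, which is convenient for the density remark following the lemma that extends it to $f,g\in H^2(\Omega)$; your construction also extends to that setting, but less explicitly.
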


\begin{proof}
Let us make an ansatz
\begin{align*}
     w(z)&=\frac{1}{\pi}\int_\Omega g(\zeta) \frac{\overline{z-\zeta}}{z-\zeta} d\xi d\eta
     +\frac{1}{\pi}\int_\Omega f(\zeta) \frac{z-\zeta}{\overline{z-\zeta}} d\xi d\eta\\
          &\quad-\frac{1}{\pi^2}\int_\Omega \left(\int_\Omega \partial^2_{\overline\lambda} f(\lambda) \frac{\overline{\zeta-\lambda}}{\zeta-\lambda} dsdt\right)\frac{z-\zeta}{\overline{z-\zeta}} d\xi d\eta
     +\phi_1(z)+\phi_2(z),
\end{align*}
where
\begin{align*}
      \phi_1(z)&=-\frac{1}{(2\pi i)^2}\int_{\partial\Omega}\left(\int_{\partial\Omega}\partial_{\overline\lambda} f(\lambda)\frac{\overline{\zeta-\lambda}}{\zeta-\lambda}d\lambda\right)(z-\zeta)\log(\overline{z-\zeta})d\zeta\\
      &\quad -\frac{1}{(2\pi i)^2}\int_{\partial\Omega}\left(\int_{\partial\Omega} f(\lambda)\frac{1}{\zeta-\lambda}d\lambda\right) (z-\zeta)\log(\overline{z-\zeta})d\zeta\\
      &\quad-\frac{1}{(2\pi i)^2}\int_{\partial\Omega}\left(\int_{\partial\Omega} \partial_{\overline\lambda}f(\lambda)\frac{1}{\zeta-\lambda}d\lambda\right) |z-\zeta|^2\log(\overline{z-\zeta})d\zeta
\end{align*}
and
$$
      \phi_2(z)=-\frac{1}{2\pi i}\int_{\partial\Omega}\partial_\zeta g(\zeta)|z-\zeta|^2\log(z-\zeta)d\overline\zeta
      -\frac{1}{2\pi i}\int_{\partial\Omega} g(\zeta)(\overline{z-\zeta})\log(z-\zeta)d\overline\zeta.
$$
Here we take the principal value for the $\log$. Since $z-\zeta$
does not vanish for all $z\in\Omega$ and $\zeta\in\partial\Omega$,
$h(z,\zeta)=\log(z-\zeta)$ is well-defined on $\Omega\times D$ where
$D=\{\zeta\in \partial\Omega, 0<\arg(z-\zeta)<2\pi\}$. Moreover, for
fixed $\zeta\in \partial\Omega$, the function $h(z,\zeta)$ is
holomorphic in $\Omega$. We can interchange the differentiation and
the integral sign see Chapter 8 in \cite{L} and get
$$
    \partial^2_z\phi_1(z)=0,\ \ \ \partial^2_{\overline{z}}\phi_2(z)=0\ \ \hbox{in $\Omega$}.
$$
On the other hand, we can compute that
\begin{align*}
\partial^2_{\overline{z}}\phi_1(z)&=\frac{1}{(2\pi i)^2}\int_{\partial\Omega} \left(\int_{\partial\Omega} \partial_{\overline\lambda}f(\lambda)\frac{\overline{\zeta-\lambda}}{\zeta-\lambda}d\lambda\right) \partial_{\overline \zeta}\left( \frac{z-\zeta}{\overline{z-\zeta}}\right)d\zeta\\
&\quad +\frac{1}{(2\pi i)^2}\int_{\partial\Omega}\left( \int_{\partial\Omega} f(\lambda)\frac{1}{\zeta-\lambda} d\lambda \right)\partial_{\overline \zeta}\left( \frac{z-\zeta}{\overline{z-\zeta}}\right)d\zeta\\
&\quad -\frac{1}{(2\pi i)^2}\int_{\partial\Omega}\left( \int_{\partial\Omega} \partial_{\overline\lambda}f(\lambda)\frac{1}{\zeta-\lambda} d\lambda \right) \frac{z-\zeta}{\overline{z-\zeta}}d\zeta,
\end{align*}
and
$$
\partial^2_z \phi_2(z)=\frac{1}{2\pi i}\int_{\partial\Omega} g(\zeta)\partial_\zeta\left(\frac{\overline{z-\zeta}}{z-\zeta}\right)d\overline\zeta
-\frac{1}{2\pi i}\int_{\partial\Omega} \partial_\zeta g(\zeta)\frac{\overline{z-\zeta}}{z-\zeta}d\overline\zeta.
$$
Using the compatibility condition \eqref{comp},  Lemma~\ref{lem3.1}, and Gauss's formula (\ref{Gauss}) twice, we can see that
\begin{align*}
&\partial^2_z\left( \frac{1}{\pi^2}\int_\Omega \left(\int_\Omega \partial^2_{\overline\lambda} f(\lambda) \frac{\overline{\zeta-\lambda}}{\zeta-\lambda} dsdt\right)\frac{z-\zeta}{\overline{z-\zeta}} d\xi d\eta\right)\\
&=\partial^2_z\left( \frac{1}{\pi^2}\int_\Omega \left(\int_\Omega \partial^2_{\lambda} g(\lambda) \frac{\overline{\zeta-\lambda}}{\zeta-\lambda} dsdt\right)\frac{z-\zeta}{\overline{z-\zeta}} d\xi d\eta\right)=\frac{1}{\pi}\int_{\Omega}\partial^2_{\zeta} g(\zeta) \frac{\overline{z-\zeta}}{z-\zeta} d\xi d\eta\\
&= \frac{1}{\pi }\int_{\Omega} g(\zeta)\partial^2_\zeta \left(\frac{\overline{z-\zeta}}{z-\zeta}\right)d\xi d\eta
+\frac{1}{2\pi i}\int_{\partial\Omega} g(\zeta)\partial_\zeta\left(\frac{\overline{z-\zeta}}{z-\zeta}\right)d\overline\zeta\\
&\quad -\frac{1}{2\pi i}\int_{\partial\Omega} \partial_\zeta g(\zeta)\frac{\overline{z-\zeta}}{z-\zeta}d\overline\zeta.
\end{align*}
By the above relation and the ansatz, we then deduce
\begin{align*}
\partial^2_z w(z)
&=\frac{1}{\pi}\int_\Omega g(\zeta)\partial^2_z \left(\frac{\overline{z-\zeta}}{z-\zeta}\right)d\xi d\eta+f(z)\\
&\quad-\partial^2_z\left( \frac{1}{\pi^2}\int_\Omega \left(\int_\Omega \partial^2_\lambda g(\lambda) \frac{\overline{\zeta-\lambda}}{\zeta-\lambda} dsdt\right)\frac{z-\zeta}{\overline{z-\zeta}} d\xi d\eta\right)
 +\partial^2_z \phi_1(z)+\partial^2_z \phi_2(z)\\
&=f(z).
\end{align*}

On the other hand, from (\ref{hpomp}), we have that
\begin{align*}
\int_\Omega \partial^2_{\overline\lambda} f(\lambda) \frac{\overline{\zeta-\lambda}}{\zeta-\lambda} dsdt
   =f(\zeta)\pi+\frac{1}{2i} \int_{\partial\Omega} f(\lambda)\left( \frac{1}{\zeta-\lambda} \right)d\lambda
    +\frac{1}{2i} \int_{\partial\Omega}\partial_{\overline\lambda} f(\lambda)\frac{\overline{\zeta-\lambda}}{\zeta-\lambda} d\lambda,
\end{align*}
which implies that
\begin{align*}
h(z)&:=\partial^2_{\overline z}\left( \frac{1}{\pi^2}\int_\Omega \left(\int_\Omega \partial^2_{\overline\lambda} f(\lambda) \frac{\overline{\zeta-\lambda}}{\zeta-\lambda} dsdt\right)\frac{z-\zeta}{\overline{z-\zeta}} d\xi d\eta\right)\\
&= \frac{1}{\pi }\int_{\Omega} f(\zeta)\partial^2_{\overline z}\left( \frac{z-\zeta}{\overline{z-\zeta}}\right)d\xi d\eta\\
&\quad+\frac{1}{2\pi^2 i}\int_\Omega\left(\int_{\partial\Omega} f(\lambda)\left(\frac{1}{\zeta-\lambda}\right)d\lambda+\int_{\partial\Omega} \partial_{\overline\lambda}f(\lambda)\frac{\overline{\zeta-\lambda}}{\zeta-\lambda}d\lambda\right) \partial^2_{\overline z} \left(\frac{z-\zeta}{\overline{z-\zeta}}\right)d\xi d\eta.
\end{align*}
Applying (\ref{Gauss}) twice yields
\begin{align*}
h(z)&=\frac{1}{\pi }\int_{\Omega} f(\zeta)\partial^2_{\overline z}\left( \frac{z-\zeta}{\overline{z-\zeta}}\right)d\xi d\eta\\
&\quad+\frac{1}{(2\pi i)^2}\int_{\partial\Omega}\left( \int_{\partial\Omega} f(\lambda)\left( \frac{1}{\zeta-\lambda} \right)d\lambda\right) \partial_{\overline \zeta}\left( \frac{z-\zeta}{\overline{z-\zeta}}\right)d\zeta\\
&\quad +\frac{1}{(2\pi i)^2}\int_{\partial\Omega}\left( \int_{\partial\Omega}\partial_{\overline\lambda} f(\lambda)\frac{\overline{\zeta-\lambda}}{\zeta-\lambda} d\lambda \right)\partial_{\overline \zeta}\left( \frac{z-\zeta}{\overline{z-\zeta}}\right)d\zeta\\
&\quad-\frac{1}{(2\pi i)^2}\int_{\partial\Omega}\left( \int_{\partial\Omega} \partial_{\overline\lambda}f(\lambda)\frac{1}{\zeta-\lambda} d\lambda \right) \frac{z-\zeta}{\overline{z-\zeta}}d\zeta.
\end{align*}
In view of Lemma~\ref{lem3.1} and $h$, we conclude that
\begin{align*}
\partial^2_{\overline z} w(z)
&=g(z)+\int_\Omega f(\zeta)\partial^2_{\overline z} \left(\frac{z-\zeta}{\overline{z-\zeta}}\right)d\xi d\eta\\
&\quad-\partial^2_{\overline z}\left( \frac{1}{\pi^2}\int_\Omega \left(\int_\Omega \partial^2_{\overline\eta} f(\eta) \frac{\overline{\zeta-\eta}}{\zeta-\eta} dsdt\right)\frac{z-\zeta}{\overline{z-\zeta}} d\xi d\eta\right)
 +\partial^2_{\overline z} \phi_1(z)+\partial^2_{\overline z} \phi_2(z)\\
&=g(z).
\end{align*}

\end{proof}

Note that the above lemma also holds when $f,g\in H^2(\Omega)$ since
we can approximate a $H^2$ function by a sequence in $
C^\infty(\overline\Omega)$ in the $H^2(\Omega)$ space.


\subsection{$\partial_{\overline z}$ system}

Let $A$ and $B$ be two $2\times 2$ matrices. We define $A\cdot B=\trace(AB^T).$
We write equation (\ref{divdiv}) in nondivergence form
\begin{align*}
0&=\diver\diver\left(\frac{1}{2\mu}\left(\nabla^2\phi-\frac{\Delta\phi}{2} I_2\right)\right)\\
 &=\frac{1}{4\mu}\Delta^2\phi+\frac{1}{2}\nabla\left(\frac{1}{\mu}\right)\cdot\nabla(\Delta\phi)+\frac{1}{2}\nabla^2\left(\frac{1}{\mu}\right)
 \cdot\left(\nabla^2\phi-\frac{\Delta\phi}{2}I_2\right).
\end{align*}
Since $\mu>0$, the equation above is equivalent to
\begin{align}
\Delta^2\phi+2\mu\nabla\left(\frac{1}{\mu}\right)\cdot\nabla\left(\Delta\phi\right)+2\mu\nabla^2\left(\frac{1}{\mu}\right)
 \cdot\left(\nabla^2\phi-\frac{\Delta\phi}{2}I_2\right)=0,
\end{align}
which implies that
\begin{equation}\label{1403}
     \partial^2_{\overline z}\partial^2_{z}\phi+ \alpha \partial^2_{ z} \partial_{\overline z} \phi +\beta \partial^2_{ z}\phi +\overline\alpha \partial_{z} \partial^2_{\overline z} \phi
      +\overline\beta \partial^2_{\overline z}\phi=0,
\end{equation}
where
\begin{align}\label{alphabeta}
    \alpha=\mu\partial_{\overline z}\left(\frac{1}{\mu}\right),\ \
    \beta=\frac{\mu}{2}\partial_{\overline z}^2\left(\frac{1}{\mu}\right).
\end{align}

With equation \eqref{1403} in mind, we define a first order system $D+V$ acting on functions with values in $\mathbb{C}^4$ as follows
\begin{align}\label{sys}
D+V=\left(
  \begin{array}{ccccc}
    \partial_{\overline z} & 0  & 0 & 0 \\
    0 & \partial_{\overline z}  & 0 & 0 \\
    0 & 0  & \partial_z & 0 \\
    0 & 0  & 0 & \partial_z \\
  \end{array}
\right)+
\left(
  \begin{array}{ccccc}
    \alpha  & \beta & \overline\alpha & \overline\beta \\
    -1 & 0  & 0 & 0 \\
    \alpha  & \beta & \overline\alpha & \overline\beta \\
    0 & 0 & -1 & 0 \\
  \end{array}
\right).
\end{align}
The corresponding Cauchy data of $D+V$ is
$$
     C_{D+V}=\left\{ U|_{\partial\Omega}: U\in H^1(\Omega,\mathbb{C}^4),
      U\ \mbox{is a solution of (D+V)U=0}\right\}.
$$
The next key step is to show that the Cauchy data $C_{\mu}^{\ast}$ for \eqref{divdiv} determine $C_{D+V}$. To do so, we begin the following lemma saying that $C_{\mu}^{\ast}$ determines all derivatives of the solution on the boundary up to third order under suitable assumption.
\begin{lemma}\label{third}
Assume that $\partial^\kappa\mu_1|_{\partial\Omega}=\partial^\kappa\mu_2|_{\partial\Omega}$ for all $|\kappa|\leq 1$.
If $C_{\mu_1}^{\ast}=C_{\mu_2}^{\ast}$, i.e.,
$$
    \{\phi_1,\phi_{1,n},M_{1,n},(M_t)_{1,t}\}
    =\{\phi_2,\phi_{2,n},M_{2,n},(M_t)_{2,t}\},
$$
where $\phi_j$ is the solution to the equation $P_{\mu_j}(\phi_j)=0$, $j=1,2$, then
$$
\partial^\kappa \phi_1=\partial^\kappa \phi_2\ \ \hbox{on $\partial\Omega$ for $|\kappa|\leq 3$}.
$$
\end{lemma}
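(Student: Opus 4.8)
The plan is to upgrade the boundary agreement of the Cauchy data to the agreement of all derivatives of $\phi$ up to order three, proceeding order by order. At each stage the tangential derivative operator $(\cdot)_{,t}=\textbf{t}\cdot\nabla$ propagates known boundary data to all of its tangential derivatives, so the only thing left to determine at each order is the single \emph{purely normal} derivative, which tangential differentiation cannot reach; that one scalar is supplied precisely by the appropriate piece of the Neumann data. To begin, I would record that for any ambient function $v$ one has $v_{,t}=\textbf{t}\cdot\nabla v$, so matching $v$ on $\partial\Omega$ matches every tangential derivative of $v$. Applying this to $\phi$ and to $\phi_{,n}$, the equalities $\phi_1=\phi_2$ and $\phi_{1,n}=\phi_{2,n}$ give $\phi_{1,t}=\phi_{2,t}$ and $\phi_{1,n}=\phi_{2,n}$, hence $\nabla\phi_1=\nabla\phi_2$ on $\partial\Omega$; this disposes of the case $|\kappa|\le 1$.

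For the second order I would differentiate $\nabla\phi_1=\nabla\phi_2$ tangentially: since $(\phi_{,j})_{,t}=(\nabla^2\phi\,\textbf{t})_j$, this yields $\nabla^2\phi_1\,\textbf{t}=\nabla^2\phi_2\,\textbf{t}$ on $\partial\Omega$, i.e. agreement of both $\textbf{t}\cdot\nabla^2\phi\,\textbf{t}$ and $\textbf{n}\cdot\nabla^2\phi\,\textbf{t}$. The only second-order quantity not yet controlled is $\textbf{n}\cdot\nabla^2\phi\,\textbf{n}$, and here the datum $M_n$ enters: a direct computation in the $(\textbf{n},\textbf{t})$ frame, using $\Delta\phi=\textbf{n}\cdot\nabla^2\phi\,\textbf{n}+\textbf{t}\cdot\nabla^2\phi\,\textbf{t}$ and $\textbf{t}\cdot\textbf{n}=0$, gives
\[
M_n=\frac{1}{4\mu}\left(\textbf{n}\cdot\nabla^2\phi\,\textbf{n}-\textbf{t}\cdot\nabla^2\phi\,\textbf{t}\right).
\]
Because $\mu_1=\mu_2$ on $\partial\Omega$ and $M_{1,n}=M_{2,n}$, the combination $\textbf{n}\cdot\nabla^2\phi\,\textbf{n}-\textbf{t}\cdot\nabla^2\phi\,\textbf{t}$ agrees, and since the subtracted term is already matched, so is $\textbf{n}\cdot\nabla^2\phi\,\textbf{n}$. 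Thus the full Hessian agrees, $\nabla^2\phi_1=\nabla^2\phi_2$ on $\partial\Omega$, which settles $|\kappa|=2$.

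For the third order I would repeat the scheme. Tangential differentiation of $\nabla^2\phi_1=\nabla^2\phi_2$ matches every third derivative carrying at least one tangential slot, leaving only the purely normal third derivative undetermined. This I would extract from $(M_t)_{,t}$. Writing $S=\frac{1}{2\mu}\left(\nabla^2\phi-\frac{\Delta\phi}{2}I_2\right)$, the tangential term $(\textbf{t}\cdot S\,\textbf{n})_{,t}=\left(\frac{1}{2\mu}\,\textbf{t}\cdot\nabla^2\phi\,\textbf{n}\right)_{,t}$ contributes, at top order, only third derivatives with a tangential slot, so the new content sits entirely in $\diver S\cdot\textbf{n}$, whose third-order part computes to $\frac{1}{4\mu}\,\textbf{n}\cdot\nabla(\Delta\phi)$. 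Since $\textbf{n}\cdot\nabla(\Delta\phi)$ equals the purely normal third derivative plus third derivatives carrying a tangential slot (which are already matched), and since every remaining term in $(M_t)_{,t}$ pairs $1/\mu$ or $\nabla(1/\mu)$ with derivatives of $\phi$ of order at most two and with curvature factors of $\partial\Omega$ — all of which coincide by the hypotheses $\partial^\kappa\mu_1=\partial^\kappa\mu_2$ for $|\kappa|\le 1$ and by the second-order conclusion — the equality $(M_t)_{1,t}=(M_t)_{2,t}$ forces the purely normal third derivative to agree. Combined with the tangentially differentiated identities, this gives $\partial^\kappa\phi_1=\partial^\kappa\phi_2$ for all $|\kappa|\le 3$.

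The main obstacle will be the bookkeeping in the third-order step: one must verify that the remainder in $(M_t)_{,t}$ depends on the viscosity only through $\mu$ and $\nabla\mu$ — so that the hypothesis $|\kappa|\le 1$ is exactly what is needed and no second derivatives of $\mu$ intrude — and that it depends on $\phi$ only through derivatives already shown to coincide, including the curvature corrections generated when $(\cdot)_{,t}$ acts on the moving frame $(\textbf{n},\textbf{t})$. Isolating the principal part $\frac{1}{4\mu}\,\textbf{n}\cdot\nabla(\Delta\phi)$ of $(M_t)_{,t}$ and confirming that its only genuinely new content is the single purely normal third derivative is the crux of the argument.
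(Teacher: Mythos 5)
Your proposal is correct and follows essentially the same route as the paper: match derivatives order by order, using tangential differentiation of already-matched boundary data together with $M_n$ (at order two) and the divergence part of $(M_t)_{,t}$ (at order three) to supply the single missing purely normal derivative, with the hypothesis $\partial^\kappa\mu_1=\partial^\kappa\mu_2$ for $|\kappa|\leq 1$ absorbing all viscosity coefficients. The only difference is presentational: the paper verifies invertibility of explicit $3\times 3$ and $4\times 4$ matrix systems in Cartesian components, whereas you organize the same linear algebra invariantly in the $(\textbf{n},\textbf{t})$ frame — and the curvature bookkeeping you flag as the crux disappears entirely if, as in the paper, one tangentially differentiates the fixed Cartesian derivatives $\phi_{,ij}$ rather than frame components.
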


\begin{proof}


    The equalities $\phi_1=\phi_2$ and $\phi_{1,n}=\phi_{2,n}$ gives
    $\nabla \phi_1=\nabla \phi_2$ on $\partial\Omega$, i.e.,
    \begin{equation*}
          \phi_{1,1}=\phi_{2,1},\ \ \phi_{1,2}=\phi_{2,2} \ \ \hbox{on $\partial\Omega$}
    \end{equation*}
    and thus
    \begin{align}\label{second}
          \nabla\phi_{1,k}\cdot \textbf{t}=\nabla\phi_{2,k}\cdot \textbf{t},\ \ \ \ k=1,2\ \ \hbox{on $\partial\Omega$}.
    \end{align}
    Moreover, since $M_{1,n}=M_{2,n}$, by the definition of $M_n$ and the hypothesis
    $\mu_1|_{\partial\Omega}=\mu_2|_{\partial\Omega}$, we obtain
    \begin{align}\label{Mn}
    (n_1^2-n_2^2)(\phi_{1,11}-\phi_{2,11}) -(n_1^2-n_2^2)(\phi_{1,22}
    -\phi_{2,22})+4n_1n_2(\phi_{1,12}-\phi_{2,12})=0.
    \end{align}
    From (\ref{second}) and (\ref{Mn}), we have
    \begin{align}
        AU:=\left(
          \begin{array}{ccc}
            -n_2 & n_1 & 0 \\
            0 & -n_2 & n_1 \\
            n^2_1-n^2_2 & 4n_1n_2 &  n^2_2-n^2_1\\
          \end{array}
        \right)
                \left(
                 \begin{array}{c}
                   \phi_{1,11}-\phi_{2,11} \\
                   \phi_{1,12}-\phi_{2,12} \\
                   \phi_{1,22}-\phi_{2,22} \\
                 \end{array}
               \right)=0\ \ \ \hbox{on $\partial\Omega$}.
    \end{align}
    Since the matrix $A$ is invertible, we get that
    $\phi_{1,ij}=\phi_{2,ij}$ on $\partial\Omega$ for $1\leq i,j\leq 2$.

    With $\phi_{1,ij}=\phi_{2,ij}$ on $\partial\Omega$, we can deduce
$$
\nabla\phi_{1,ij}\cdot \textbf{t}=\nabla\phi_{2,ij}\cdot \textbf{t},
$$
that is,
\begin{align}\label{31}
    -n_2\phi_{1,1ij}+n_1\phi_{1,2ij}=-n_2\phi_{2,1ij}+n_1\phi_{2,2ij}.
\end{align}
Using the condition $(M_t)_{1,t}=(M_t)_{2,t}$ and
$\phi_{1,ij}=\phi_{2,ij}$ on $\partial\Omega$ for $1\leq i,j\leq 2$,
it follows that
\begin{align}\label{32}
     \diver\left(\frac{1}{2\mu_1} (\nabla^2\phi_1-\frac{\Delta \phi_1}{2}I_2)\right)\cdot\textbf{n}
     = \diver\left(\frac{1}{2\mu_2} (\nabla^2\phi_2-\frac{\Delta \phi_2}{2}I_2)\right)\cdot\textbf{n}.
\end{align}
Putting (\ref{31}), (\ref{32}) together and using the boundary assumption of $\mu$, we obtain that
$$
\left(
  \begin{array}{cccc}
    -n_2 & n_1 & 0 & 0 \\
    0 & -n_2 & n_1 & 0 \\
    0 & 0 & -n_2 & n_1 \\
    n_1 & n_2 & n_1 & n_2 \\
  \end{array}
\right)
\left(
  \begin{array}{c}
    \phi_{1,111}-\phi_{2,111} \\
    \phi_{1,112}-\phi_{2,112} \\
    \phi_{1,122}-\phi_{2,122} \\
    \phi_{1,222}-\phi_{2,222} \\
  \end{array}
\right)=0.
$$
Since the matrix above is invertible, we deduce that $\phi_{1,ijk}=\phi_{2,ijk}$ for $1\leq i,j,k\leq 2$.

\end{proof}

We are now ready to prove the crucial step.
\begin{lemma}\label{DV}
     Assume that $\mu\in C^3(\overline\Omega)$. Suppose that $\partial^\kappa\mu_1|_{\partial\Omega}=\partial^\kappa\mu_2|_{\partial\Omega},\ \forall\ |\kappa|\leq 1$.
     The Cauchy data $C^*_{\mu}$ of $P_\mu$ determines the Cauchy data $C_{D+V}$ of
     $D+V$.
\end{lemma}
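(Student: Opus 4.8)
The plan is to realize the first order system $(D+V)U=0$ as a reformulation of the fourth order equation \eqref{1403}, and then transport to $U$ the boundary information about $\phi$ already extracted in Lemma \ref{third}. Concretely, I would introduce the change of unknowns
\begin{align*}
U=\left(\partial^2_z\partial_{\overline z}\phi,\ \partial^2_z\phi,\ \partial_z\partial^2_{\overline z}\phi,\ \partial^2_{\overline z}\phi\right)^T
\end{align*}
and verify by direct substitution that if $\phi$ solves $P_\mu(\phi)=0$, equivalently \eqref{1403}, then $U$ solves $(D+V)U=0$: the second and fourth rows of the system reduce to the identities $U_1=\partial_{\overline z}U_2$ and $U_3=\partial_z U_4$, while the first and third rows each reproduce \eqref{1403} after commuting the mixed derivatives (for the first row, $\partial_{\overline z}U_1=\partial^2_{\overline z}\partial^2_z\phi$; for the third, $\partial_z U_3=\partial^2_z\partial^2_{\overline z}\phi$).

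For the converse I would start from an arbitrary $U\in H^1(\Omega,\mathbb{C}^4)$ with $(D+V)U=0$ and reconstruct a potential. The rows $U_1=\partial_{\overline z}U_2$ and $U_3=\partial_z U_4$ give $\partial^2_{\overline z}U_2=\partial_{\overline z}U_1$ and $\partial^2_z U_4=\partial_z U_3$, and the first and third rows force $\partial_{\overline z}U_1=\partial_z U_3=-(\alpha U_1+\beta U_2+\overline\alpha U_3+\overline\beta U_4)$, so the compatibility condition $\partial^2_{\overline z}U_2=\partial^2_z U_4$ of Lemma \ref{partial} holds with $f=U_2$, $g=U_4$. Lemma \ref{partial} then produces $\phi$ with $\partial^2_z\phi=U_2$ and $\partial^2_{\overline z}\phi=U_4$; since $U_1=\partial_{\overline z}U_2=\partial^2_z\partial_{\overline z}\phi$ and $U_3=\partial_z U_4=\partial_z\partial^2_{\overline z}\phi$, the four components of $U$ are exactly the derivatives of $\phi$ listed above, and the first row of the system is precisely \eqref{1403}, i.e.\ $P_\mu(\phi)=0$. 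Thus the data $U$ of a solution of $(D+V)U=0$ is always generated by a solution $\phi$ of $P_\mu(\phi)=0$.

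With the correspondence in hand, I would prove $C_{D+V_1}\subseteq C_{D+V_2}$ (the reverse following by symmetry), writing $D+V_j$ for the system associated with $\mu_j$. Given $U$ with $(D+V_1)U=0$, the previous step yields $\phi_1$ with $P_{\mu_1}(\phi_1)=0$ whose derivatives generate $U$. Since $C^*_{\mu_1}=C^*_{\mu_2}$, the Cauchy data $(\phi_1,\phi_{1,n},M_{1,n},(M_t)_{1,t})|_{\partial\Omega}$ is also attained by some solution $\phi_2$ of $P_{\mu_2}(\phi_2)=0$; applying the forward direction to $\phi_2$ produces $\tilde U$ with $(D+V_2)\tilde U=0$. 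Lemma \ref{third}, whose hypotheses (matching boundary data of $\mu$ up to order one and equal Cauchy data) are exactly those assumed here, then gives $\partial^\kappa\phi_1=\partial^\kappa\phi_2$ on $\partial\Omega$ for all $|\kappa|\leq 3$. Because each component of $U$ and of $\tilde U$ is a fixed linear combination of real derivatives of $\phi_1$, respectively $\phi_2$, of order at most three, I conclude $U|_{\partial\Omega}=\tilde U|_{\partial\Omega}$, hence $U|_{\partial\Omega}\in C_{D+V_2}$.

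The derivative-matching in the last paragraph is immediate once Lemma \ref{third} is available; the substantive point, and the step I expect to require the most care, is the converse correspondence, where one must check the compatibility condition and invoke Lemma \ref{partial} to recover $\phi$ from $U$, together with the attendant regularity bookkeeping. The latter I would handle by noting that the principal part $\mathrm{diag}(\partial_{\overline z},\partial_{\overline z},\partial_z,\partial_z)$ is elliptic, so an $H^1$ solution $U$ is smooth in the interior, making Lemma \ref{partial} applicable there, and by passing between $U\in H^1$ and $\phi\in H^4$ via interior elliptic regularity for the fourth order operator $P_\mu$ with $C^3$ coefficients and the density argument already used for $C^*_\mu$.
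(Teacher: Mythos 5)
Your proposal is correct and follows essentially the same route as the paper: both directions of the correspondence between solutions $U$ of $(D+V)U=0$ and potentials $\phi$ with $P_\mu(\phi)=0$ are established via the compatibility condition and Lemma \ref{partial}, and the boundary matching is then obtained from Lemma \ref{third} exactly as in the paper's argument. Your explicit set-inclusion formulation ($C_{D+V_1}\subseteq C_{D+V_2}$ plus symmetry) and the remarks on interior elliptic regularity are slightly more careful bookkeeping than the paper provides, but they do not change the substance of the proof.
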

\begin{proof}
Assume that $C^*_{\mu_1}=C^*_{\mu_2}$ with two parameters $\mu_1$ and $\mu_2$. Let $U_1=(u_1,u_2,u_3,u_4)^T$ be a solution of $(D+V_1)U_1=0$, then
$$
(D+V_1)U_1=\left(\left(
  \begin{array}{ccccc}
    \partial_{\overline z} & 0  & 0 & 0 \\
    0 & \partial_{\overline z}  & 0 & 0 \\
    0 & 0  & \partial_z & 0 \\
    0 & 0  & 0 & \partial_z \\
  \end{array}
\right)+
\left(
  \begin{array}{ccccc}
    \alpha_1  & \beta_1 & \overline\alpha_1 & \overline\beta_1 \\
    -1 & 0  & 0 & 0 \\
    \alpha_1 & \beta_1 & \overline\alpha_1 & \overline\beta_1 \\
    0 & 0 & -1 & 0 \\
  \end{array}
\right)\right)
\left(
  \begin{array}{c}
    u_1 \\
    u_2 \\
    u_3 \\
    u_4 \\
  \end{array}
\right)
=0,
$$
where $\alpha_j,\beta_j$ are defined in \eqref{alphabeta} with respect to $\mu_j$, $j=1,2$, respectively. The 2nd and 4th equations of the system $(D+V_1)U_1=0$ gives
\begin{align}\label{24}
 \partial_{\overline z} u_2=u_1,\ \ \ \partial_z u_4=u_3.
\end{align}
Likewise, the 1st and 3rd equations of $(D+V_1))U_1=0$ implies
\begin{align}\label{13}
  \partial_{\overline z} u_1=\partial_z u_3
\end{align}
It immediately follows from (\ref{24}) and (\ref{13}) that
\begin{align}\label{comb}
  \partial_{\overline z}^2 u_2=\partial_z^2 u_4.
\end{align}
In view of \eqref{comb} and Lemma~\ref{partial}, there exists a function $\Phi_1$ satisfying
\begin{align}\label{not1}
     \partial^2_z \Phi_1=u_2,\ \ \partial_{\overline z}^2 \Phi_1=u_4.
\end{align}
Substituting \eqref{not1} into (\ref{24}) gives
\begin{align}\label{not2}
     u_1=\partial_{\overline z} u_2=\partial^2_{ z} \partial_{\overline z} \Phi_1,\ \ u_3=\partial_z u_4=\partial_{ z} \partial^2_{\overline z}\Phi_1.
\end{align}

The 1st equation of $(D+V_1)U_1=0$, i.e.,
$$
    \partial_{\overline z} u_1+\alpha u_1+\beta u_2+\overline\alpha u_3+\overline\beta u_4=0
$$
with $u_1,\cdots,u_4$ replaced by \eqref{not1} and \eqref{not2} above, is equivalent to
$$
   P_{\mu_1}(\Phi_1)=0\quad\text{in}\quad\Omega
$$
(cf. \eqref{1403}). 
Similarly, for $V_2$ and $U_2$ satisfying $(D+V_2)U_2=0$ in $\Omega$ associated with $\mu_2$, we obtain a $\Phi_2$ solving $P_{\mu_2}(\Phi_2)=0$ in $\Omega$ where the components of $U_2$ and $\Phi_2$ satisfy corresponding equations like \eqref{not1}, \eqref{not2}.  The assumption $C^*_{\mu_1}=C^*_{\mu_2}$ implies
$$
\{\Phi_1,\Phi_{1,n},M_{1,n},(M_t)_{1,t}\}=\{\Phi_2,\Phi_{2,n},M_{2,n},(M_t)_{2,t}\}
$$
and Lemma~\ref{third} gives
$$
\partial^\kappa(\Phi_1-\Phi_2)|_{\partial\Omega}=0 \ \ \ \hbox{for
$|\kappa|\leq 3$}.
$$
Since $U_2=(\partial^2_{ z} \partial_{\overline z}\Phi_2,\partial_z^2\Phi_2,\partial_{ z} \partial^2_{\overline z}\Phi_2,\partial_{\overline z}^2\Phi_2)^T$, we obtain $(U_1-U_2)|_{\partial\Omega}=0$ and thus $C_{D+V_1}=C_{D+V_2}$.

\end{proof}

\subsection{Proof of the uniqueness result}

We denote
\begin{align*}
A=\left(
    \begin{array}{cc}
      \alpha & \beta \\
      -1 & 0 \\
    \end{array}
  \right)
  \ \ \hbox{and}\ \
  Q=\left(
    \begin{array}{cc}
      \alpha & \beta \\
      0 & 0 \\
    \end{array}
  \right),
\end{align*}
then the system $D+V$ can be represented as
$$
    D+V=
\left(
    \begin{array}{cc}
      \partial_{\overline z}I_{2} & 0 \\
      0 & \partial_z I_2\\
    \end{array}
  \right)
  +
  \left(
    \begin{array}{cc}
      A & \overline Q \\
      Q & \overline A \\
    \end{array}
  \right).
$$
In the following lemma, we show that $\mu$ is uniquely determined by the Cauchy data $C_{D+V}$.
\begin{lemma}\label{muid}
    Let $(\alpha_j,\beta_j), j=1,2$ be in $C^1(\overline\Omega)$.
    Assume that $\partial^\kappa\mu_1|_{\partial\Omega}=\partial^\kappa\mu_2|_{\partial\Omega}$ for all $|\kappa|\leq 1$. If $C_{D+V_1}=C_{D+V_2}$, then
     $
    \mu_1=\mu_2
     $
     in $\Omega$.
\end{lemma}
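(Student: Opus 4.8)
The plan is to combine the uniqueness theorem of Albin, Guillarmou, Tzou and Uhlmann \cite{AGTU} for the first order system $D+V$ with the explicit dependence of the potential on $\mu$ recorded in \eqref{alphabeta}. In outline, I would first use their result to turn the hypothesis $C_{D+V_1}=C_{D+V_2}$ into pointwise equality of the potentials, and then extract $\mu_1=\mu_2$ from the single scalar identity $\alpha_1=\alpha_2$ together with the boundary matching of $\mu$.

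For the first step I would place our operator in the framework of \cite{AGTU}: in the block form written above, $D+V$ is the diagonal operator $\mathrm{diag}(\partial_{\overline z}I_2,\partial_z I_2)$ perturbed by the zeroth order matrix potential $\left(\begin{smallmatrix} A & \overline Q \\ Q & \overline A \end{smallmatrix}\right)$, which is exactly the type of system they treat. Since this potential is not diagonal, I would invoke their reduction of the non-diagonal case to the diagonal one, after which their theorem gives that the Cauchy data $C_{D+V}$ uniquely determines the full potential $V$. Applying this to the two viscosities yields $V_1=V_2$ in $\Omega$, and in particular $\alpha_1=\alpha_2$ (one also gets $\beta_1=\beta_2$, though only the former is needed below).

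The second step is elementary. For each $\mu_j>0$ the definition \eqref{alphabeta} gives $\alpha_j=\mu_j\,\partial_{\overline z}(1/\mu_j)=-\partial_{\overline z}(\log\mu_j)$, so the identity $\alpha_1=\alpha_2$ reads $\partial_{\overline z}g=0$ in $\Omega$, where $g:=\log\mu_1-\log\mu_2$ is real-valued. Because $g$ is real, $\partial_z g=\overline{\partial_{\overline z}g}=0$ as well; hence both Wirtinger derivatives of $g$ vanish, and since $\Omega$ is connected, $g$ is constant. The boundary hypothesis, in the case $|\kappa|=0$, gives $\mu_1=\mu_2$ on $\partial\Omega$, so $g=0$ there and therefore $g\equiv 0$, i.e.\ $\mu_1=\mu_2$ in $\Omega$.

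I expect the substantive difficulty to lie entirely in the first step: verifying that the structured potential of $D+V$ meets the hypotheses of the theorem in \cite{AGTU} and that their argument recovers $V$ itself. A point to watch is the usual gauge ambiguity for first order systems, namely that the Cauchy data a priori fixes the potential only up to a gauge transformation equal to the identity on $\partial\Omega$; one must ensure, via the diagonal reduction of \cite{AGTU}, that the coefficient $\alpha$ is in fact determined unambiguously. Once $\alpha_1=\alpha_2$ is secured, the passage to $\mu_1=\mu_2$ is short and uses only the positivity of the $\mu_j$, the connectedness of $\Omega$, and the agreement of $\mu$ on the boundary.
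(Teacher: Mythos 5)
Your second step is fine, but your first step contains a genuine gap, and it sits exactly where you predicted the difficulty would lie: the statement you want from \cite{AGTU} is not what that paper proves for non-diagonal potentials. For a potential of the block form $\left(\begin{smallmatrix} A & \overline Q \\ Q & \overline A\end{smallmatrix}\right)$, the Cauchy data does \emph{not} determine $V$ pointwise. What Theorem 4.1 of \cite{AGTU} yields from $C_{D+V_1}=C_{D+V_2}$ --- and what the paper actually invokes --- is only a gauge equivalence: there exist invertible matrices $F_j\in C^1(\Omega,\mathbb{C}^2\oplus\mathbb{C}^2)$ with $F_1=F_2$ on $\partial\Omega$ such that $\partial_{\overline z}F_j=F_jA_j$ and $Q_1=\overline F Q_2F^{-1}$, where $F=F_1^{-1}F_2$. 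The ``reduction of the non-diagonal case to the diagonal case'' that you appeal to is precisely the passage to these gauges (solving $\partial_{\overline z}F_j=F_jA_j$ to absorb the diagonal blocks $A_j$), and it is what \emph{introduces} the unknown conjugation $F$; it does not eliminate it. So $V_1=V_2$, and in particular $\alpha_1=\alpha_2$, cannot be quoted from \cite{AGTU}; indeed the paper's own proof never establishes that equality directly.

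Closing this gap is essentially the entire content of the paper's proof, and it uses structure your argument never touches. Writing $F^{-1}=\left(\begin{smallmatrix} h & v \\ m & r\end{smallmatrix}\right)$, the relation $Q_1=\overline FQ_2F^{-1}$ forces $m=0$ on the set $\Omega'$ where $(\alpha_1,\beta_1)\neq(0,0)$; the companion structure of $A_j$ (its second row $(-1,0)$) forces each $F_j^{-1}$ to have the form \eqref{Fj}, whence $F^{-1}=\left(\begin{smallmatrix} r & \partial_{\overline z}r \\ 0 & r\end{smallmatrix}\right)$ on $\Omega'$; and comparing entries of $\partial_{\overline z}F^{-1}=-A_2F^{-1}+F^{-1}A_1$ gives the two scalar equations $2\partial_{\overline z}r=(\alpha_1-\alpha_2)r$ and $\partial_{\overline z}^2 r=(\beta_1-\beta_2)r-\alpha_2\partial_{\overline z}r$. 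Eliminating $r$ between these and using the identity $2\beta_j=\partial_{\overline z}\alpha_j+\alpha_j^2$ --- which holds only because $\alpha_j$ and $\beta_j$ are not independent potentials but both derive from the single function $\mu_j$ via \eqref{alphabeta} --- yields $\alpha_1^2=\alpha_2^2$, i.e.\ $\alpha$ is recovered only up to a pointwise sign. That residual ambiguity, removed at the very end using the continuity of $\nabla\mu_j$ and the hypothesis $\partial^\kappa\mu_1|_{\partial\Omega}=\partial^\kappa\mu_2|_{\partial\Omega}$ for $|\kappa|\leq 1$, is concrete evidence that the Cauchy data does not fix $\alpha$ ``unambiguously'' in the way your plan requires: the gauge freedom you flagged as a point to watch is the central obstruction, not a technicality disposed of by the diagonal reduction, and overcoming it needs both the algebraic constraint linking $\alpha$ and $\beta$ and the boundary agreement of $\mu$. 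Your concluding argument from $\alpha_1=\alpha_2$ to $\mu_1=\mu_2$ is correct and matches the paper's ending (the paper runs it from $\alpha_1^2=\alpha_2^2$), but as written your proof has no valid route to $\alpha_1=\alpha_2$.
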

\begin{proof}
Using that  $C_{D+V_1}=C_{D+V_2}$, we apply Theorem 4.1 in \cite{AGTU} to obtain that there exist invertible matrices
$F_j\in C^1(\Omega,\mathbb{C}^2\oplus\mathbb{C}^2)$ such that
$F_1=F_2$ on $\partial\Omega$. Moreover,
\begin{equation}\label{faq}
     \partial_{\overline z} F_j=F_jA_j\ \ \hbox{and}\ \ Q_1=\overline FQ_2F^{-1},
\end{equation}
where $F:=F^{-1}_1F_2$ is an invertible matrix.

Let us denote the two rows of the matrix $F_j^{-1}$ by $a_j$ and
$b_j$, then the first relation of \eqref{faq} implies $\partial_{\overline z} F_j^{-1}=-A_jF^{-1}_j$ and hence
\begin{align}\label{Fj}
F_j^{-1}= \left(
  \begin{array}{c}
    \partial_{\overline z} b_j\\
    b_j \\
  \end{array}
\right)
\end{align}
with the help of the form of $A_j$. We now write
$$
F^{-1}=
\left(
  \begin{array}{cc}
    h & v \\
    m & r \\
  \end{array}
\right).
$$
Using the condition $Q_1=\overline FQ_2F^{-1}$, we have that
\[
     \overline m\alpha_1=\overline m\beta_1=0
\]
 and
 \begin{equation}\label{a2b2}
     \overline h\alpha_1=h\alpha_2+m\beta_2,\ \ \overline h\beta_1=v\alpha_2+r\beta_2.
\end{equation}
Then $m=0$ in $\Omega'$, where $\Omega'=\{x\in\Omega: \alpha_1(x)\neq 0\ \hbox{or}\ \beta_1(x)\neq 0\}$.
Note that if $x$ is in the
complement of $\Omega'$, then $(\alpha_2(x),\beta_2(x))$ must be zero
by \eqref{a2b2} since $F$ is invertible. Thus $\alpha_1=\alpha_2=0$ in the
complement of $\Omega'$. If $\Omega'$ is empty, then $\alpha_1=0=\alpha_2$ in $\Omega$. By the boundary condition $\partial^\kappa\mu_1=\partial^\kappa\mu_2$ for $|\kappa|\leq 1$, we conclude that $\mu_1=\mu_2$. Actually, in this case, we obtain that $\mu_1=\mu_2=\text{constant}$.

Now we suppose that $\Omega'$ is a nonempty open set. Since $m=0$ in $\Omega'$, $F^{-1}$ can be rewritten as
$$
F^{-1}=
\left(
  \begin{array}{cc}
    h & v \\
    0 & r \\
  \end{array}
\right)
$$
in $\Omega'$. Using $F=F^{-1}_1F_2$ and (\ref{Fj}), we can deduce
that
$$
    h\partial_{\overline z} b_1+v b_1=\partial_{\overline z}b_2,\ \ rb_1=b_2,\ \ \hbox{in $\Omega'$},
$$
which implies
\begin{align}\label{Finv}
F^{-1}=
\left(
  \begin{array}{cc}
    r & \partial_{\overline z} r \\
    0 & r \\
  \end{array}
\right)\ \ \hbox{in $\Omega'$}.
\end{align}
In deriving \eqref{Finv}, we used the fact that $\partial_{\overline z}b_1$ and $b_1$ are linearly independent due to the invertibility of $F_1^{-1}$.
Note that since $F$ is invertible, $r$ never vanishes at any point in $\Omega'$.

We observe that
\begin{align*}
     \partial_{\overline z}F^{-1}
&=\partial_{\overline z}(F_2^{-1}F_1)
=\partial_{\overline z}F_2^{-1}F_1
+F_2^{-1}\partial_{\overline z}F_1\\
&=-A_2F^{-1}+F^{-1}A_1,
\end{align*}
then it follows that
\begin{align}\label{r1}
    2 \partial_{\overline z} r=(\alpha_1-\alpha_2)r
\end{align}
and
\begin{align}\label{r2}
    \partial_{\overline z}^2 r=(\beta_1-\beta_2)r-\alpha_2\partial_{\overline z} r.
\end{align}
From (\ref{r1}), we have
\begin{align}\label{r3}
     2 \partial^2_{\overline z} r=r\partial_{\overline z}(\alpha_1-\alpha_2)+(\alpha_1-\alpha_2)\partial_{\overline z}r.
\end{align}
Substituting $(\partial_{\overline z}r=(\alpha_1-\alpha_2)r/2)$ into (\ref{r2}) and (\ref{r3}) gives
\begin{align*}
     2 \partial^2_{\overline z} r=\left( 2\beta_1-2\beta_2-\alpha_2(\alpha_1-\alpha_2)   \right)r
=\left( \partial_{\overline z}(\alpha_1-\alpha_2)+(\alpha_1-\alpha_2)^2/2   \right)r,
\end{align*}
which implies
\begin{align}\label{r4}
 2\beta_1-2\beta_2-\alpha_2(\alpha_1-\alpha_2)  =\partial_{\overline z}(\alpha_1-\alpha_2)+(\alpha_1-\alpha_2)^2/2 .
\end{align}
Note that $r$ does not vanish in $\Omega'$.
By direct computation and the definition of $\alpha_j$ and $\beta_j$ in (\ref{alphabeta}), it follows that
\begin{align}\label{rr}
    2\beta_j=\partial_{\overline z} \alpha_j+\alpha_j^2.
\end{align}
Then we obtain
\begin{align}\label{alpha}
   \alpha_1^2= \alpha_2^2\;\; \text{in}\;\; \Omega'
\end{align}
by substituting (\ref{rr}) into (\ref{r4}).
Combining \eqref{alpha} and the previously derived fact
\begin{align*}
\alpha_1=\alpha_2=0\;\;\text{in}\;\;\Omega\setminus\Omega',
\end{align*}
we have that
$$
    \alpha^2_1=\alpha^2_2\;\;\text{in}\;\; \Omega,
$$
which is equivalent to
$$
     ( \nabla \log\mu_1)^2=( \nabla \log\mu_2)^2\;\;\text{in}\;\;\Omega.
$$
Since $\mu_1|_{\partial\Omega}=\mu_2|_{\partial\Omega}$ and by the continuity of $\mu_j$ and $\nabla\mu_j$, $j=1,2$, we obtain
$$
      \nabla \log\mu_1=\nabla\log\mu_2\;\;\text{in}\;\;\Omega.
$$
Using the boundary condition $\mu_1|_{\partial\Omega}=\mu_2|_{\partial\Omega}$ again, we finally conclude that
$\mu_1=\mu_2$ in $\Omega$.

\end{proof}



\emph{Proof of theorem \ref{main}.}
From Section \ref{sec2} we have known that the Cauchy data for the Stokes equations and that for the equation $P_\mu(\phi)=0$ are equivalent, that is, $C_{\mu_1}=C_{\mu_2}$ is equivalent to $C^*_{\mu_1}=C^*_{\mu_2}$.
Therefore, Theorem \ref{main} follows from Lemma \ref{DV} and Lemma \ref{muid}.

\section{Global uniqueness for the stationary Navier-Stokes equations}\label{sec4}
In this section we consider the unique determination of the viscosity in an incompressible fluid described by the stationary Navier-Stokes equations. In higher dimensions, this problem has been solved by Li and Wang in \cite{LW} using the linearization technique. Since their methods are independent of spatial dimensions, we could apply their ideas to show the uniqueness result of $\mu$ for the Navier-Stokes equations in the two dimensional case.

Let $u=(u_1,u_2)^T$ be the velocity vector field satisfying the stationary Navier-Stokes equations
\begin{align}\label{navier}
       \left\{
          \begin{array}{cl}
         \diver\sigma(u,p)-(u\cdot \nabla)u=0 & \hbox{in $\Omega$},\\
            \diver u=0  & \hbox{in $ \Omega$},
           \end{array}
        \right.
\end{align}
and the corresponding Cauchy data is denoted by
$$
     \tilde C_{\mu}=\left\{ \left(u, \sigma(u,p)\textbf{n}\right)|_{\partial\Omega}:
      (u,p)\ \mbox{satisfies (\ref{navier})}  \right\}.
$$

Let $u|_{\partial\Omega}=\phi\in H^{3/2}(\partial\Omega)$ satisfy (\ref{compat}). We choose $\phi=\varepsilon\psi$ with $\psi\in H^{3/2}(\partial\Omega)$ and let $(u_\varepsilon, p_\varepsilon)=(\varepsilon v_\varepsilon, \varepsilon q_\varepsilon)$ satisfy (\ref{navier}). The problem (\ref{navier}) is reduced to
\begin{align}\label{navier1}
       \left\{
          \begin{array}{cl}
         \diver\sigma(v_\varepsilon,q_\varepsilon)-\varepsilon(v_\varepsilon\cdot \nabla)v_\varepsilon=0 & \hbox{in $\Omega$},\\
            \diver v_\varepsilon=0  & \hbox{in $ \Omega$},\\
            v_\varepsilon=\psi  & \hbox{on $\partial\Omega$}.
           \end{array}
        \right.
\end{align}
We are looking for a solution of (\ref{navier1}) with the form $v_\varepsilon=v_0+\varepsilon v$ and $q_\varepsilon=q_0+\varepsilon q$, where $(v_0,q_0)$ satisfies the Stokes equations
\begin{align}\label{stokes1}
       \left\{
          \begin{array}{cl}
         \diver\sigma(v_0,q_0)=0 & \hbox{in $\Omega$},\\
            \diver v_0=0  & \hbox{in $ \Omega$},\\
            v_0=\psi & \hbox{on $\partial\Omega$},
           \end{array}
        \right.
\end{align}
and $(v,q)$ satisfies
\begin{align}\label{asym}
       \left\{
          \begin{array}{cl}
         -\diver\sigma(v,q)+\varepsilon (v_0\cdot\nabla)v +\varepsilon(v\cdot\nabla)v_0+\varepsilon^2(v\cdot\nabla)v=f & \hbox{in $\Omega$},\\
            \diver v=0  & \hbox{in $ \Omega$},\\
            v=0 & \hbox{on $\partial\Omega$},
           \end{array}
        \right.
\end{align}
with $f=-(v_0\cdot \nabla)v_0$.

In \cite{LW}, it is shown that for any $\psi\in H^{3/2}(\partial\Omega)$, let $(v_0,q_0)\in H^2(\Omega)\times H^1(\Omega)$ be the unique solution ($q_0$ is unique up to a constant) of the Stokes equations (\ref{stokes1}). There exists a solution $(u_\varepsilon,p_\varepsilon)$ of (\ref{navier}) of the form
$$
      u_\varepsilon=\varepsilon v_0+\varepsilon^2 v,\ \ p_\varepsilon=\varepsilon q_0+\varepsilon^2 q
$$
with the boundary data $u_\varepsilon|_{\partial\Omega}=\varepsilon\psi$ for all $|\varepsilon|\leq \varepsilon_0$, where $\varepsilon_0$ depends on $\|\psi\|_{H^{3/2}(\partial\Omega)}$.
Here $(v,p)$ is a solution of (\ref{asym}) and satisfies the regularity result
$$
    \|v\|_{H^2(\Omega)}+\|q\|_{H^1(\Omega)/\mathbb{R}}\leq C \sum^{16}_{j=2} \|\psi\|^j_{H^{3/2}(\partial\Omega)}
$$
where $C$ is independent of $\varepsilon$ and $\|q\|_{H^1(\Omega)/\mathbb{R}}:=\inf_{c\in\mathbb{R}}\|q+c\|_{H^1(\Omega)}$.
Hence, we have
\begin{align*}
     &\|\varepsilon^{-1} u_\varepsilon-v_0\|_{H^2(\Omega)}=\|\varepsilon v\|_{H^2(\Omega)}\rightarrow 0,\\
     &\|\varepsilon^{-1} p_\varepsilon-q_0\|_{H^1(\Omega)/\mathbb{R}}=\|\varepsilon q\|_{H^1(\Omega)/\mathbb{R}}\rightarrow 0,
\end{align*}
as $\varepsilon\rightarrow 0$, which imply
\begin{align}\label{4:vare1}
     &\|\varepsilon^{-1} u_\varepsilon|_{\partial\Omega}-v_0|_{\partial\Omega}\|_{H^{3/2}(\partial\Omega)}\rightarrow 0,
\end{align}
and
\begin{align}\label{4:vare2}
     &\|\varepsilon^{-1} \sigma(u_\varepsilon,p_\varepsilon)\textbf{n}|_{\partial\Omega}-\sigma(v_0,q_0)\textbf{n}|_{\partial\Omega}\|_{H^{1/2}(\partial\Omega)}\rightarrow 0,
\end{align}
provided
$$
    \int_\Omega p_\varepsilon dx=\int_\Omega q_0 dx=0.
$$
From (\ref{4:vare1}) and (\ref{4:vare2}), we can deduce that the Cauchy data $\tilde C_\mu$ of the Navier-Stokes equations uniquely determines the Cauchy data $C_\mu$ of the Stokes equations. In other words, $\tilde C_{\mu_1}=\tilde C_{\mu_2}$ implies $C_{\mu_1}=C_{\mu_2}$. Therefore, the uniqueness of the viscosity for the Navier-Stokes equations follows from Theorem \ref{main}. We have the following theorem.

\begin{theorem}\label{main2}
     Let $\Omega$ be a simply connected bounded domain in $\mathbb{R}^2$ with smooth boundary. Suppose that $\mu_1$ and $\mu_2$ are two viscosity functions for the Navier-Stokes equations. Assume that $\mu_j\in C^3(\overline\Omega)$ and $\mu_j>0$ with
     \begin{align*}
     \partial^\kappa\mu_1|_{\partial\Omega}=\partial^\kappa\mu_2|_{\partial\Omega}\ \ \hbox{for all $|\kappa|\leq 1$.}
     \end{align*}
     Let $\tilde C_{\mu_1}$ and $\tilde C_{\mu_2}$ be the Cauchy data associated with $\mu_1$ and $\mu_2$, respectively. If $\tilde C_{\mu_1}=\tilde C_{\mu_2}$, then
$
\mu_1=\mu_2
$
in $\Omega$.
\end{theorem}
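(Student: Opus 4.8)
The plan is to reduce the nonlinear problem to the Stokes problem already settled in Theorem~\ref{main}, using the linearization technique of Isakov as carried out in \cite{LW}. Since Theorem~\ref{main} asserts that the Stokes Cauchy data $C_\mu$ determines $\mu$, it suffices to prove that the Navier--Stokes Cauchy data $\tilde C_\mu$ determines $C_\mu$; the implication $\tilde C_{\mu_1}=\tilde C_{\mu_2}\Rightarrow\mu_1=\mu_2$ then follows by composing this reduction with Theorem~\ref{main}.

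First I would prescribe small boundary data $u|_{\partial\Omega}=\varepsilon\psi$, where $\psi\in H^{3/2}(\partial\Omega)$ satisfies the compatibility condition \eqref{compat}, and seek a solution of \eqref{navier} in the scaled form $(u_\varepsilon,p_\varepsilon)=(\varepsilon v_\varepsilon,\varepsilon q_\varepsilon)$. Under this ansatz \eqref{navier} becomes \eqref{navier1}, in which the convective term acquires an explicit factor of $\varepsilon$. I would then split $v_\varepsilon=v_0+\varepsilon v$ and $q_\varepsilon=q_0+\varepsilon q$, with $(v_0,q_0)$ solving the Stokes system \eqref{stokes1} for the same boundary data $\psi$ and viscosity $\mu$, and with the correction $(v,q)$ governed by \eqref{asym} with forcing $f=-(v_0\cdot\nabla)v_0$.

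The technical heart is producing the correction $(v,q)$ together with an estimate that is uniform in $\varepsilon$. Here I would invoke the construction of \cite{LW}, which is independent of the spatial dimension and hence applies unchanged in two dimensions: for all $|\varepsilon|\le\varepsilon_0$ (with $\varepsilon_0$ depending only on $\|\psi\|_{H^{3/2}(\partial\Omega)}$) it yields a solution obeying $\|v\|_{H^2(\Omega)}+\|q\|_{H^1(\Omega)/\mathbb{R}}\le C\sum_{j=2}^{16}\|\psi\|_{H^{3/2}(\partial\Omega)}^j$ with $C$ independent of $\varepsilon$. This $\varepsilon$-uniform bound is precisely what makes the correction terms $\varepsilon v$, $\varepsilon q$ negligible in the limit.

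Finally, after normalizing $\int_\Omega p_\varepsilon\,dx=\int_\Omega q_0\,dx=0$, I would let $\varepsilon\to0$ and use the convergences \eqref{4:vare1} and \eqref{4:vare2}: the rescaled Cauchy pair $\bigl(\varepsilon^{-1}u_\varepsilon,\varepsilon^{-1}\sigma(u_\varepsilon,p_\varepsilon)\textbf{n}\bigr)|_{\partial\Omega}$ converges to the Stokes Cauchy pair $\bigl(v_0,\sigma(v_0,q_0)\textbf{n}\bigr)|_{\partial\Omega}$. Since $\psi$ is arbitrary and the prescribed trace $\varepsilon\psi$ is the same for both viscosities, equality of the full Navier--Stokes Cauchy sets forces equality of the limiting Stokes Cauchy pairs, so $\tilde C_{\mu_1}=\tilde C_{\mu_2}$ yields $C_{\mu_1}=C_{\mu_2}$. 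Theorem~\ref{main} then gives $\mu_1=\mu_2$ in $\Omega$. I expect the main obstacle to be the $\varepsilon$-uniform solvability and regularity of the nonlinear correction system \eqref{asym}; once that is secured via \cite{LW}, the limiting argument and the appeal to Theorem~\ref{main} are routine.
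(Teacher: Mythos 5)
Your proposal is correct and follows essentially the same route as the paper: the scaling $u|_{\partial\Omega}=\varepsilon\psi$, the decomposition $v_\varepsilon=v_0+\varepsilon v$, $q_\varepsilon=q_0+\varepsilon q$ with the correction solved via the $\varepsilon$-uniform estimate of \cite{LW}, the limits \eqref{4:vare1}--\eqref{4:vare2} showing $\tilde C_{\mu_1}=\tilde C_{\mu_2}\Rightarrow C_{\mu_1}=C_{\mu_2}$, and the final appeal to Theorem~\ref{main}. No gaps beyond those the paper itself delegates to \cite{LW}.
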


\end{document}